\theoremstyle{plain}
\newtheorem{lem}{Lemma}[section]
\newtheorem{thm}{Theorem}
\theoremstyle{remark}
\newtheorem{rem}[lem]{Remark}
\theoremstyle{definition}
\providecommand{\abs}[1]{\lvert#1\rvert} 
\providecommand{\norm}[1]{\lVert#1\rVert}
\DeclareMathOperator{\ud}{\!d\!}
\DeclareMathOperator{\Ud}{D}
\DeclareMathOperator{\supp}{supp}
\DeclareMathOperator{\esssup}{ess\, sup}
\DeclareMathOperator{\Div}{div}
\DeclareMathOperator{\Rot}{rot}
\DeclareMathOperator{\Dt}{\frac{\ud}{\ud t}}
\numberwithin{equation}{section}
\begin{document}

\title[Global existence of strong solutions]{Global existence of strong solutions to micropolar equations in cylindrical domains}
\author{Bernard Nowakowski}
\thanks{The author is partially supported by Polish KBN grant N N201 393137}
\address{Bernard Nowakowski\\ Institute of Mathematics\\ Polish Academy of Sciences\\ \'Snia\-deckich 8\\ 00-956 Warsaw\\ Poland}
\email{bernard@impan.pl}

\subjclass[2000]{35Q30, 76D05}

\keywords{micropolar fluids, cylindrical domains, global existence, strong solutions}
\begin{abstract}
	The micropolar equations are a useful generalization of the classical Navier-Stokes model for fluids with micro-structure. We prove the existence of global and strong solutions to these equations in cylindrical domains in $\mathbb{R}^3$. We do not impose any restrictions on the magnitude of the initial and external data but we require that they cannot change in the $x_3$-direction too fast.  
\end{abstract}

\maketitle

\section{Introduction}

Introduced in 1966 by A.~Eringnen (see \cite{erin}), micropolar equations became an important generalization of the classical Navier-Stokes model. These equations take into account that fluid molecules may rotate independently of the fluid rotation. Thus, the standard Navier-Stokes system is complemented with another vector equation which describes the angular momentum of the particles. If we denote the velocity field by $v$ and the microrotation fields by $\omega$, then we see that $(v,\omega)$ has six degrees of freedom. Let us clearly emphasize that $\omega$ does not represent the rotation field $(\Rot v)$ derived from the velocity field ($v$) and in most cases these vector fields differ fundamentally from each other. This phenomenon gains an immense significance for modelling some well-known fluids, e.g. animal blood or liquid crystals (see e.g. \cite{pop2}).

In the microscale, when at least one dimension of the domain is only a few times larger than the size of the molecules (e.g. blood vessels, lubricants), fluid motions even for isotropic fluid can differ substantially from what would follow from the computations based entirely on the Navier-Stokes equations (see \cite{shar}). This behavior is caused by the dominance of the surface stresses over body forces. Although not all the aspects of physical experiments have been fully explained but it is justified to assume that the surface stresses and the internal degrees of freedom of particles are the deciding factors for properties of fluid motion. 

It is worth mentioning, that apart from A. Eringen, other mathematicians and physicists have proposed numerous generalizations of the Navier-Stokes equations. The comparison of these theories can be found in \cite{ari}. For a short historical review we refer the reader to \cite[Ch. 1, \S 5]{luk1}. 

In this work we plan to investigate the global existence of strong solutions to micropolar equations which are given by
\begin{equation}
	\begin{aligned}\label{p1}
		&v_{,t} + v\cdot \nabla v - (\nu + \nu_r)\triangle v + \nabla p = 2\nu_r\Rot\omega + f   & &\text{in } \Omega^{\infty} := \Omega\times(0,\infty),\\
		&\Div v = 0 & &\text{in } \Omega^{\infty},\\
		&\omega_{,t} + v\cdot \nabla \omega - \alpha \triangle \omega - \beta\nabla\Div\omega + 4\nu_r\omega  = 2\nu_r\Rot v + g & &\text{in } \Omega^{\infty}, \\
		&v\vert_{t = 0} = v(0), \qquad \omega\vert_{t = 0} = \omega(0) & &\text{in $\Omega$}.	
    \end{aligned}
\end{equation}
The unknowns are: the velocity field $v$, the pressure $p$ and the microroation field $\omega$. The viscosity coefficients $\nu$, $\nu_r$, $\alpha$ and $\beta$ are fixed and positive. Note that if $\nu_r$ then \eqref{p1}$_{1,2}$ and \eqref{p1}$_3$ get uncoupled. Therefore we cannot expect better results than for the classical Navier-Stokes equations.

So far we have not specified the domain $\Omega$. We assume that is has a product form 
\begin{equation*}
	\left\{(x_1,x_2)\in \mathbb{R}^2\colon \varphi(x_1,x_2) \leq c_0\right\}\times\left\{x_3\colon -a \leq x_3 \leq a\right\},
\end{equation*}
where the constants $a$ and $c_0$ are positive and $\varphi$ is a $\mathcal{C}^2$-closed curve in $\mathbb{R}^2$. Thus,  $\Omega$ is a finite cylinder placed alongside the $x_3$-axis (see Figure \ref{fig1}). 
\begin{figure}[h!]
	\begin{center}
		\includegraphics[width=0.6\linewidth]{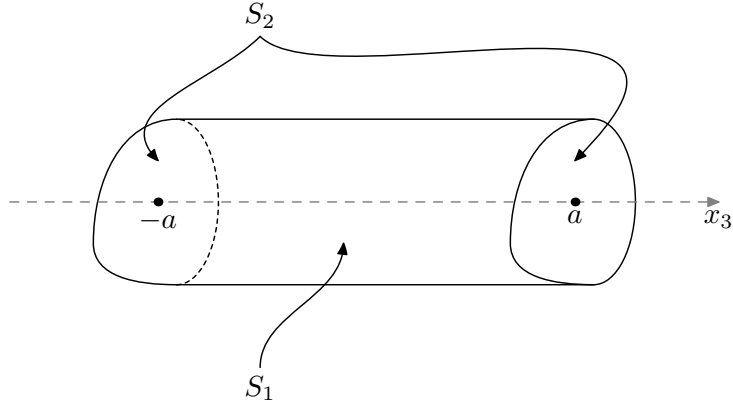}
		\caption{The domain $\Omega$ and its boundary $S: = S_1\cup S_2$.}\label{fig1}
	\end{center}
\end{figure}

From practical point of view (blood vessels, lubrication theory) our choice is justified. From theoretical perspective, our approach is intensely focused upon search for such solutions that are close to two dimensional (see e.g. \cite{wm2}, \cite{ren1}, \cite{wm6}). The solutions which are proved to exist, can be regarded as a slight perturbation of two dimensional flow along the perpendicular direction. This perturbation will be somehow measured by $\delta(t)$ (see \eqref{eq26}), which we introduce later. 

We shall emphasize that since we only require the initial rate of change of the flow and microrotation, as well as the derivatives of the external data with respect to $x_3$ to be small, the flow alongside the cylinder can be large, but close to constant. 

As far as the boundary condition are concerned, we use
\begin{equation}
		\begin{aligned}\label{p2}
			&v\cdot n = 0 & &\text{on $S^{\infty} := S\times(0,\infty)$}, \\
			&\Rot v \times n = 0  & &\text{on $S^{\infty}$}, \\
			&\omega = 0 & &\text{on $S_1^{\infty}$}, \\
			&\omega' = 0, \qquad \omega_{3,x_3} = 0 & &\text{on $S_2^{\infty}$},
    \end{aligned}    
\end{equation}
where $n$ is the unit outward vector. The first two equations may be interpreted as tangential ``slip'' velocity being proportional to tangential stress with a factor of proportionality depending only on the curvature of $\varphi$ (see e.g. \cite{Clopeau:1998vj}, \cite{kell}). Such boundary condition was already postulated in 1827 by C.M.L.H. Navier. The third equation is clear but from the physical point view not necessarily adequate because the molecules may not move but they can rotate (see \cite{bou}). This  effect is regarded in the fourth equation (see \cite{mig}). 

\section{Notation}

Before we present the main result of this work, we should employ some notation. By $\Omega^t$ we denote $\Omega\times(t_0,t)$ where $0 \leq t_0 < t < \infty$. A generic constant $c$ may change from line to line and is subscripted with appropriate symbol which indicates the dependence on the domain ($c_{\Omega}$), embedding theorems ($c_I$), the Poincar\'e inequality ($c_P$) and viscosity coefficients ($c_{\alpha,\beta,\nu,\nu_r}$). To simplify the formulas we will also use
\begin{align*}
	&h := v_{,x_3}, & & &\theta := \omega_{,x_3}.
\end{align*}

To shorten energy estimate we introduce
\begin{equation}\label{eq14}
	\begin{aligned}
		E_{v,\omega}(t) &:= \norm{f}_{L_2(t_0,t;L_{\frac{6}{5}}(\Omega))} + \norm{g}_{L_2(t_0,t;L_{\frac{6}{5}}(\Omega))} + \norm{v(t_0)}_{L_2(\Omega)} + \norm{\omega(t_0)}_{L_2(\Omega)}, \\
		E_{h,\theta}(t) &:= \norm{f_{,x_3}}_{L_2(t_0,t;L_{\frac{6}{5}}(\Omega))} + \norm{g_{,x_3}}_{L_2(t_0,t;L_{\frac{6}{5}}(\Omega))} + \norm{h(t_0)}_{L_2(\Omega)} + \norm{\theta(t_0)}_{L_2(\Omega)}.
	\end{aligned}
\end{equation}
Finally, the function
\begin{equation}\label{eq26}
	\delta(t) := \norm{f_{,x_3}}^2_{L_2(\Omega^t)} + \norm{g_{,x_3}}^2_{L_2(\Omega^t)} + \norm{\Rot h(t_0)}^2_{L_2(\Omega)} + \norm{h(t_0)}^2_{L_2(\Omega)} + \norm{\theta(t_0)}^2_{L_2(\Omega)}
\end{equation}
will be of particular interest. It expresses the smallness assumption which has to be made in order to prove the existence of regular solutions on $(t_0,t)$. Note that it contains only derivative of the external and the initial data with respect to $x_3$.

The notation for function spaces is standard and follows \cite[Ch. 3, \S 1.1]{luk1}, \cite[Ch. 2, \S 3]{lad} and \cite[Ch. 1, \S 1.1]{tem}: 

\begin{enumerate}
	\item[$\bullet$] $W^m_p(\Omega)$, where $m \in \mathbb{N}$, $p \geq 1$, is the closure of $\mathcal{C}^{\infty}(\Omega)$ in the norm
		\begin{equation*}
			\norm{u}_{W^m_p(\Omega)} = \left(\sum_{\abs{\alpha} \leq m} \norm{\Ud^{\alpha} u}_{L_p(\Omega)}^p\right)^{\frac{1}{p}},
		\end{equation*}
	\item[$\bullet$] $H^k(\Omega)$, where $k \in \mathbb{N}$, is simply $W^k_2(\Omega)$, 
	\item[$\bullet$] $W^{2,1}_p(\Omega^t)$, where $p \geq 1$, is the closure of $\mathcal{C}^{\infty}(\Omega\times(t_0,t_1))$ in the norm
		\begin{equation*}
			\norm{u}_{W^{2,1}_p(\Omega^t)} = \left(\int_{t_0}^{t_1}\!\!\!\int_{\Omega} \abs{u_{,xx}(x,s)}^p + \abs{u_{,x}(x,s)}^p + \abs{u(x,s)}^p + \abs{u_{,t}(x,s)}^p\, \ud x\, \ud s\right)^{\frac{1}{p}},
		\end{equation*}
	\item[$\bullet$] $H^1_0(\Omega)$ is the closure of $\mathcal{C}^{\infty}_0(\Omega)$ in the norm
		\begin{equation*}
			\norm{u}_{H^1_0(\Omega)} = \left(\int_{\Omega} \abs{\nabla u(x)}^2\, \ud x\right)^{\frac{1}{2}},
		\end{equation*}
	\item[$\bullet$] $L_q(t_0,t_1;X)$, where $q \geq 1$ and $X$ is a Banach space, is the set of all strongly measurable functions defined on the interval $[t_0,t_1]$ with values in $X$ with finite norm defined by
		\begin{equation*}
			\norm{u}_{L_q(t_0,t_1;X)} = \left(\int_{t_0}^{t_1}\norm{u(s)}_X^q\, \ud s\right)^{\frac{1}{q}},
		\end{equation*}
		where $1 \leq p < \infty$ and by
		\begin{equation*}
			\norm{u}_{L_{\infty}(t_0,t_1;X)} = \underset{t_0\leq s \leq t_1}{\esssup}\norm{u(s)}_X,
		\end{equation*}
		for $q = \infty$.
	\item[$\bullet$] $V^k_2(\Omega^t)$, where $k \in \mathbb{N}$, is the closure of $\mathcal{C}^{\infty}(\Omega\times(t_0,t_1))$ in the norm
		\begin{equation*}
			\norm{u}_{V^k_2(\Omega^t)} = \underset{t\in (t_0,t_1)}{\esssup}\norm{u}_{H^k(\Omega)} \\
    +\left(\int_{t_0}^{t_1}\norm{\nabla u}^2_{H^{k}(\Omega)}\, \ud t\right)^{1/2}
		\end{equation*}
\end{enumerate}

\section{Main result}

The main results of this work reads:

\begin{thm}[global existence]\label{t2}
	Let $t_0 = 0$ and $0 < T < \infty$ be sufficiently large and fixed. Suppose that $v(0), \omega(0) \in H^1(\Omega)$ and $\Rot h(0) \in L_2(\Omega)$. In addition, let the external data satisfy $f_3\vert_{S_2} = 0$, $g'\vert_{S_2} = 0$, 
	\begin{align*}
		&\norm{f(t)}_{L_2(\Omega)} \leq \norm{f(kT)}_{L_2(\Omega)}e^{-(t - kT)}, & & &\norm{f_{,x_3}(t)}_{L_2(\Omega)} &\leq \norm{f_{,x_3}(kT)}_{L_2(\Omega)}e^{-(t - kT)},\\
		&\norm{g(t)}_{L_2(\Omega)} \leq \norm{g(kT)}_{L_2(\Omega)}e^{-(t - kT)}, & & &\norm{g_{,x_3}(t)}_{L_2(\Omega)} &\leq \norm{g_{,x_3}(kT)}_{L_2(\Omega)}e^{-(t - kT)}
	\end{align*}
	and 
	\begin{equation*}
		\sup_k \left\{f(kT),f_{,x_3}(kT),g(kT),g_{,x_3}(kT)\right\} < \infty.
	\end{equation*}
	Then, for $\delta(T)$ sufficiently small there exists a unique and regular solution to problem \eqref{p1} equipped with the boundary conditions \eqref{p2} on the interval $(0,\infty)$. Moreover, 
	\begin{multline*}
		\norm{v}_{W^{2,1}_2(\Omega^{\infty})} + \norm{\omega}_{W^{2,1}_2(\Omega^{\infty})} + \norm{\nabla p}_{L_2(\Omega^{\infty})} \leq \sup_k \Big(\norm{f}_{L_2(\Omega^{kT})} + \norm{f_{,x_3}}_{L_2(\Omega^{kT})} \\
		+ \norm{g}_{L_2(\Omega^{kT})} + \norm{g_{,x_3}}_{L_2(\Omega^{kT})} + \norm{v(0)}_{H^1(\Omega)} + \norm{\omega(0)}_{H^1(\Omega)} + 1\Big)^3.
	\end{multline*}
\end{thm}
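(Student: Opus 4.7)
\emph{Overall strategy.} The plan is to produce a local-in-time strong solution by a standard Galerkin/linearization scheme and then prolong it to $(0,\infty)$ by the step-by-step method on the intervals $[kT,(k+1)T]$. What must be established is a uniform a~priori bound for $\norm{v}_{W^{2,1}_2(\Omega^{kT})}+\norm{\omega}_{W^{2,1}_2(\Omega^{kT})}+\norm{\nabla p}_{L_2(\Omega^{kT})}$ that does not degenerate in $k$; the role of the smallness of $\delta(T)$ is to close a nonlinear energy estimate for the $x_3$-derivatives $h=v_{,x_3}$, $\theta=\omega_{,x_3}$, so that the ``nearly two-dimensional'' structure of the flow is preserved on every slab $[kT,(k+1)T]$.

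\emph{Basic energy estimate.} First I would test \eqref{p1}$_1$ with $v$ and \eqref{p1}$_3$ with $\omega$. The slip boundary conditions \eqref{p2}$_{1,2}$, combined with the identity $-\triangle v=\Rot\Rot v-\nabla\Div v$, produce only a curvature-dependent boundary term that is absorbed. The coupling terms $2\nu_r\Rot\omega$ and $2\nu_r\Rot v$ are split by Young's inequality and absorbed in the dissipation, which yields the standard $L_\infty(L_2)\cap L_2(H^1)$ bound on $(v,\omega)$ controlled by $E_{v,\omega}(t)$.

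\emph{Estimate for $(h,\theta)$ and $\Rot h$.} Differentiating \eqref{p1} in $x_3$ gives a linear system of the same type for $(h,\theta)$ with data $f_{,x_3}$, $g_{,x_3}$ and commutators $h\cdot\nabla v$, $h\cdot\nabla\omega$. The hypotheses $f_3\vert_{S_2}=0$ and $g'\vert_{S_2}=0$ together with \eqref{p2} ensure that the boundary conditions survive $x_3$-differentiation, so integration by parts is lossless: on $S_1$ tangentiality of $\partial_{x_3}$ preserves the Dirichlet/slip conditions, while on $S_2$ the assumed vanishing of $f_3$ and $g'$ provides the missing compatibility. Testing with $h$ and $\theta$, the convective term $\int v\cdot\nabla h\cdot h$ vanishes by incompressibility and the only genuinely nonlinear term, $\int_\Omega(h\cdot\nabla v)\cdot h\,dx$, is bounded via Hölder and Gagliardo-Nirenberg by $\varepsilon\norm{\nabla h}_{L_2(\Omega)}^2+c\norm{h}_{L_2(\Omega)}^2\norm{\nabla v}_{L_2(\Omega)}^4$; the first summand is absorbed in dissipation and the second is handled by a differential inequality whose initial datum is exactly the $\delta$-part of \eqref{eq26}. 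A further test of the $x_3$-differentiated equation by $\Rot h$ upgrades this to a bound on $\norm{\Rot h}_{V^0_2(\Omega^t)}$; the cylindrical geometry and the boundary conditions again make boundary terms vanish.

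\emph{$W^{2,1}_2$ bound and global prolongation.} With $(v,\omega)$, $(h,\theta)$ and $\Rot h$ under control, I read \eqref{p1}$_1$ as a Stokes system with forcing $-v\cdot\nabla v+2\nu_r\Rot\omega+f$ and \eqref{p1}$_3$ as a parabolic system for $\omega$. The corresponding Navier-slip Stokes and parabolic regularity theory in the cylinder then delivers the claimed $W^{2,1}_2(\Omega^{kT})$ bound, with polynomial (ultimately cubic) dependence on the norms of the data, which accounts for the exponent $3$ in the statement. To pass from one slab to the next, I combine the exponential decay assumed on $f,g,f_{,x_3},g_{,x_3}$ with the exponential absorption provided by the Poincar\'e inequality in the energy estimates: this gives a recurrence of the form $\delta((k+1)T)\le\kappa\,\delta(kT)+\rho^{k}$ with $\kappa<1$ and $\rho<1$, so that choosing $\delta(T)$ small at $k=0$ keeps $\delta(kT)$ small for every $k$, closes the induction, and yields the global solution with the stated bound. \emph{The principal obstacle} is precisely the closing of the $(h,\theta)$ estimate in Step~3 uniformly in $t$: the cubic term $\int(h\cdot\nabla v)\cdot h$ does not submit to Gronwall without the smallness of $\delta$, and one must simultaneously keep the constants $k$-independent so that the bootstrap in Step~4 propagates smallness through every slab.
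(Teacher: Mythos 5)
Your overall architecture --- local existence on each slab $[kT,(k+1)T]$ plus an induction that propagates smallness of $\delta$ --- is the same as the paper's (which, incidentally, does not redo the local theory by a Galerkin scheme but simply invokes the large-time existence result, Theorem \ref{t1}, from an earlier paper). Your Step~3 corresponds in substance to Lemma \ref{lem29}: an exponentially weighted differential inequality for $\norm{\Rot h}^2_{L_2(\Omega)}+\norm{h}^2_{L_2(\Omega)}+\norm{\theta}^2_{L_2(\Omega)}$ which, for $T$ large enough, yields $\delta((k+1)T)\le\delta(kT)$ and hence $\sup_k\delta(kT)\le\delta(0)$.

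There is, however, a genuine gap in Step~4. The $W^{2,1}_2$ estimate on the slab $[kT,(k+1)T]$ (whether obtained from Stokes/parabolic regularity or from Theorem \ref{t1}) depends on $\norm{v(kT)}_{H^1(\Omega)}+\norm{\omega(kT)}_{H^1(\Omega)}$, while the bound claimed in the theorem involves only $\norm{v(0)}_{H^1(\Omega)}+\norm{\omega(0)}_{H^1(\Omega)}$. Your induction propagates only the smallness of $\delta(kT)$; it says nothing about why the $H^1$ norms of $(v,\omega)$ at the slab endpoints do not grow with $k$. If you merely feed the regularity estimate back into itself you obtain a cubic recursion $\norm{v((k+1)T)}_{H^1(\Omega)}\lesssim(\ldots+\norm{v(kT)}_{H^1(\Omega)}+1)^3$, which blows up. The paper closes this loop with a separate and substantial estimate (Lemma \ref{lem22}): one tests \eqref{p1}$_1$ with $-\alpha\triangle v$ and \eqref{p1}$_3$ with $-\alpha\triangle\omega-\beta\nabla\Div\omega$, handles the boundary integral $\int_S\Div\omega\,\Rot v\cdot n\,\ud S$ (which does \emph{not} vanish under \eqref{p2}) by the trace theorem and interpolation, and then shows that the Gronwall exponent $-c_1(t_1-t_0)+c_2\int_{t_0}^{t_1}\norm{v(s)}^2_{L_\infty(\Omega)}\,\ud s$ becomes negative for $t_1$ large, because the time integral is bounded uniformly in $t_1$ via the $W^{2,1}_2$ estimate of Theorem \ref{t1} and the exponential decay of the data. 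This yields $\norm{v(t_1)}_{H^1(\Omega)}+\norm{\omega(t_1)}_{H^1(\Omega)}\le c(t_1)\left(\norm{v(t_0)}_{H^1(\Omega)}+\norm{\omega(t_0)}_{H^1(\Omega)}\right)$ with $c(t_1)\to0$, i.e.\ a contraction for $T$ large, which is precisely what makes the slab-to-slab iteration and the stated uniform estimate possible. You need an analogue of this step; without it the induction does not close.
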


The proof of this theorem is based on the result obtained in \cite[Theorem 1]{2012arXiv1205.4046N}:
\begin{thm}[large time existence]\label{t1}
	Let $E_{v,\omega}(t) < \infty$, $E_{h,\theta}(t) < \infty$. Suppose that $v(t_0), \omega(t_0) \in H^1(\Omega)$, $f, g \in L_2(\Omega^t)$. Finally, assume that $f_3\vert_{S_2} = 0$, $g'\vert_{S_2} = 0$. Then, for $\delta(t)$ sufficiently small there exists a unique solution $(v,\omega) \in W^{2,1}_2(\Omega^t)\times W^{2,1}_2(\Omega^t)$ to problem \eqref{p1} supplemented with the boundary conditions \eqref{p2} such that
	\begin{equation*}
			\norm{v}_{W^{2,1}_2(\Omega^t)} + \norm{\nabla p}_{L_2(\Omega^t)} \leq c_{\alpha,\nu,\nu_r,I,P,\Omega}\Big(E_{v,\omega}(t) + E_{h,\theta}(t) + \norm{f'}_{L_2(\Omega^t)} + \norm{v(t_0)}_{H^1(\Omega)} + 1\Big)^3
	\end{equation*}
	and
	\begin{multline*}
			\norm{\omega}_{W^{2,1}_2(\Omega^t)} \leq c_{\alpha,\nu,\nu_r,I,P,\Omega}\Big(E_{v,\omega}(t) + E_{h,\theta}(t) + \norm{f'}_{L_2(\Omega^t)} + \norm{g}_{L_2(\Omega^t)}\\
			+ \norm{v(t_0)}_{H^1(\Omega)} + \norm{\omega(t_0)}_{H^1(\Omega)} + 1\Big)^3.
	\end{multline*}
\end{thm}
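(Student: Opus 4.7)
The plan is to combine a Galerkin construction with a priori $W^{2,1}_2$ estimates that close only under the quantitative smallness of $\delta(t)$. The estimates come in three stages, one for each of the data blocks $E_{v,\omega}$, $E_{h,\theta}$ and $\delta(t)$; chaining the stages through Sobolev embeddings is what produces the cubic exponent on the right-hand side.

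The first stage is the baseline energy identity: I test \eqref{p1}$_1$ with $v$ and \eqref{p1}$_3$ with $\omega$, sum, and use $\Div v=0$ to kill the convective terms while the slip conditions together with the boundary data for $\omega$ eliminate the boundary contributions from the viscous integration by parts. Bounding the forcing via the duality $L_{6/5}$--$L_6$ and Sobolev embedding, Gr\"onwall yields a bound on $\norm{v}_{L_\infty(t_0,t;L_2(\Omega))}+\norm{\nabla v}_{L_2(\Omega^t)}+\norm{\omega}_{L_\infty(t_0,t;L_2(\Omega))}+\norm{\nabla\omega}_{L_2(\Omega^t)}$ in terms of $E_{v,\omega}(t)$ alone; no smallness assumption is needed at this level.

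The second and critical stage is the analogous estimate for $(h,\theta)=(v_{,x_3},\omega_{,x_3})$ together with the vorticity $\Rot h$. Differentiating \eqref{p1} in $x_3$ produces a Stokes-type system for $h$ (the divergence constraint $\Div h=0$ is automatic) coupled to a parabolic problem for $\theta$, with new nonlinear forcing of the form $h\cdot\nabla v$, $\theta\cdot\nabla v$, and so on. The assumptions $f_3\vert_{S_2}=0$ and $g'\vert_{S_2}=0$ are exactly what is needed for $(h,\theta)$ to satisfy compatible boundary conditions and for the boundary integrals produced by testing to vanish. Testing with $(h,\theta)$ and then separately with $\Rot h$, I estimate the dangerous cubic term $\int_{\Omega}\abs{\nabla v}\abs{h}^2\,\ud x$ by Ladyzhenskaya-type interpolation $\norm{h}_{L_4}\lesssim\norm{h}_{L_2}^{1/4}\norm{\nabla h}_{L_2}^{3/4}$ and absorb it into the viscous dissipation using the smallness of $\delta(t)$. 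After Gr\"onwall this gives a uniform control of $\norm{h}_{L_\infty(t_0,t;H^1(\Omega))}+\norm{h}_{L_2(t_0,t;H^2(\Omega))}+\norm{\theta}_{L_\infty(t_0,t;H^1(\Omega))}+\norm{\theta}_{L_2(t_0,t;H^2(\Omega))}$ in terms of $E_{v,\omega}+E_{h,\theta}+\delta(t)$ plus the $H^1$ norms of the initial velocity and microrotation.

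The third stage lifts everything to $W^{2,1}_2$ regularity. For $v$ I invoke the linear Stokes theory for the slip problem: $\norm{v}_{W^{2,1}_2(\Omega^t)}+\norm{\nabla p}_{L_2(\Omega^t)}$ is controlled by $\norm{v\cdot\nabla v-2\nu_r\Rot\omega-f}_{L_2(\Omega^t)}$ together with the regularity of $\Rot v$; horizontal regularity of $\Rot v$ comes from two-dimensional elliptic theory applied on cross-sections of the cylinder, fed by the $(v,\omega)$ bounds of Stage 1, while vertical regularity follows from the identity $(\Rot v)_{,x_3}=\Rot h$ and the Stage 2 bound on $\Rot h$. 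Parabolic $L_2$ theory applied to the $\omega$-equation supplies the corresponding bound for $\omega$. Tracking Sobolev exponents through the three stages produces the stated cubic dependence on the data. Existence is then obtained by a standard Galerkin scheme closed with these uniform estimates, and uniqueness follows by a routine energy argument on the difference of two solutions. The principal obstacle is Stage 2: the $x_3$-differentiated system is essentially full 3D Navier--Stokes, and the smallness of $\delta(t)$ must be calibrated quantitatively enough that the cubic forcing is absorbed into the parabolic dissipation without destroying the Gr\"onwall closure.
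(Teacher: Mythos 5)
This theorem is not actually proved in the present paper: it is quoted verbatim from the companion preprint \cite{2012arXiv1205.4046N}, and only its statement and a few ingredients of its proof are imported here. Those imported ingredients --- the $x_3$-differentiated systems \eqref{p5} and \eqref{p6} of Lemmas \ref{lem12} and \ref{lem4}, the $V^0_2$ energy bound of Lemma \ref{l4}, the $(h,\theta)$ energy inequality quoted from the proof of Lemma 8.4 of \cite{2012arXiv1205.4046N}, and the overdetermined elliptic estimates of Lemma \ref{l2} --- indicate that the actual proof follows essentially the three-stage plan you describe (baseline energy estimate, small-$\delta$ estimate for $(h,\theta)$ and $\Rot h$, then parabolic/Stokes $W^{2,1}_2$ theory), so your outline is consistent with it. The one step you should state more carefully is the role of $\delta(t)$: it measures the \emph{data} of the $(h,\theta)$ problem, not the solution, so the cubic nonlinear term cannot literally be ``absorbed into the dissipation using the smallness of $\delta(t)$''; what is needed is a continuation argument in which one assumes the $V^0_2$ norm of $(h,\theta)$ stays below a threshold comparable to $\delta(t)$, shows that the resulting differential inequality then keeps it strictly below that threshold, and concludes that the bound persists on the whole interval.
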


We see that in view of Theorem \ref{t1} the extension of solutions up to the infinity with respect to time can be done in many ways. A substantive argument would be: if the solution to \eqref{p1} on $[0,T]$ is regular and its estimate does not contain any constant which depend on time, then it is global. We must emphasize that the existing terminology on the topic is not precise. According to some authors a solution is global if the constants are time-dependent, but they do not blow up for any finite $T$. However, in such case it is more accurate to speak about large time existence instead of global existence.

For \eqref{p1} we could not simply put $T = \infty$, because it would lead to improper integrals and several technical difficulties. Besides, it would imply that the external data must vanish as $T$ goes to infinity. Hence, we adopt an alternative approach. We consider local solution on the time interval of the form $[kT,(k + 1)T]$, where $k \in \mathbb{N}$ and $T > 0$ is fixed number. Starting with $k = 0$ we let $k \to \infty$, thereby obtaining a sequence of solutions with different initial conditions $v(kT)$ and $\omega(kT)$. In order to guarantee that this sequence is in fact an extension of solution from $[0,T]$, we must control the growth of the initial conditions. Additionally, if $f \neq 0$ and $g \neq 0$, certain restrictions on the external data must be also imposed.

Let us now briefly discuss previous results concerning the existence of global solutions of strong solutions to micropolar equations. In \cite{lang1} and \cite{lang2} Lange proved the existence and uniqueness of global and strong solutions to \eqref{p1} in Hilbert spaces when the data are small enough. His approach is based on integral equations. This problem was also studied by Sava in \cite{sav2} under homogeneous boundary conditions in the case when the body forces and moments are not present. When the external data is present, $\nu$ is large and the data are small in comparison to $\nu$ problem \eqref{p1} with zero Dirichlet boundary condition was examined by Łukaszewicz in \cite{luk11} and in slightly more general framework by Ortega-Torres and Rojas-Medar in \cite{ort}, who in contrast to Łukaszewicz did not assume any decay for the external data as $t$ goes to the infinity. Semi-group approach was explored by Yamaguchi in \cite{yam}. He established the global existence of strong solutions in case of small data.

For further bibliographical notes we refer the reader to \cite{roj}, \cite{ort} and \cite[Ch.3, \S 5]{luk1}.

Summing up: our result is established under smallness assumption not on the data itself but on its rate of change along $x_3$-variable. The external data do not vanish as $t$ tends to infinity but exponentially decay on time intervals of the form $[kT,(k + 1)T]$. The boundary conditions for the velocity field and partially for the microrotation field belong to the class of slip boundary conditions. 

The rest of this work consists of Section \ref{sec4}, which contains some technical remarks, and Section \ref{sec5} where we give estimates needed to prove Theorem \ref{t2}.

\section{Auxiliary tools}\label{sec4}

In this Section we present some technical facts which are essential for further considerations.

\begin{lem}[On integration by parts]\label{l1}
	Let $u$ and $w$ belong to $H^1(\Omega)$. Then
	\begin{align*}
		\int_{\Omega} \Rot u \cdot w\, \ud x &= \int_{\Omega} \Rot w \cdot u\, \ud x + \int_S u \times n \cdot w\, \ud S \\
		&= \int_{\Omega} \Rot w \cdot u\, \ud x - \int_S w \times n \cdot u\, \ud S
	\end{align*}
\end{lem}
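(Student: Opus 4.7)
The plan is to derive both identities from the divergence theorem applied to the vector field $u\times w$. First I would establish the pointwise vector identity
\begin{equation*}
\Div(u\times w)=\Rot u\cdot w-u\cdot\Rot w
\end{equation*}
by a direct index computation: writing $(u\times w)_i=\epsilon_{ijk}u_j w_k$ and differentiating, the cyclic symmetry of the Levi-Civita symbol identifies the two resulting terms with $\Rot u\cdot w$ and $-u\cdot\Rot w$.

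Second, for smooth $u$ and $w$ I would integrate this identity over $\Omega$ and apply the classical divergence theorem to obtain
\begin{equation*}
\int_\Omega(\Rot u\cdot w-u\cdot\Rot w)\,\ud x=\int_S(u\times w)\cdot n\,\ud S.
\end{equation*}
The boundary term is then rewritten using the cyclic symmetry of the scalar triple product, which recasts $(u\times w)\cdot n$ as $(n\times u)\cdot w$, yielding the first claimed identity (up to the sign convention fixed in the statement). The equivalence of the two forms is nothing but the identity $(u\times n)\cdot w=-(w\times n)\cdot u$, which comes from interchanging two slots in the triple product.

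Third, to pass from smooth $u,w$ to $u,w\in H^1(\Omega)$, I would argue by density. Both sides of the identity are continuous functionals on $H^1(\Omega)\times H^1(\Omega)$: the volume integrals are controlled by the Cauchy–Schwarz inequality applied to $L_2$-norms of first-order derivatives, while the surface integrals are controlled via the standard trace theorem $H^1(\Omega)\hookrightarrow L_2(S)$, which is available thanks to the $\mathcal{C}^2$ regularity of $\varphi$ assumed in the description of $\Omega$. Since $\mathcal{C}^\infty(\overline\Omega)$ is dense in $H^1(\Omega)$, the identity extends by continuity.

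There is no real analytic obstacle here: the pointwise identity is purely algebraic and the extension to $H^1$ is a routine density/trace argument. The only thing one has to watch carefully is bookkeeping of signs in the scalar triple product on the boundary; this is the usual source of error in statements of this type, but it is resolved once the orientation of $n$ and the convention for the Levi-Civita symbol are fixed.
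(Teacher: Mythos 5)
Your proof is correct and is precisely what the paper intends: the paper dismisses this lemma with ``It is an easy exercise,'' and the divergence theorem applied to $u\times w$ via the pointwise identity $\Div(u\times w)=\Rot u\cdot w-u\cdot\Rot w$, followed by a density/trace argument for $H^1(\Omega)$, is the standard way to supply it. Your sign bookkeeping is also right, and worth noting: the boundary term one actually obtains is $\int_S (n\times u)\cdot w\,\ud S=-\int_S (u\times n)\cdot w\,\ud S$, so the lemma as printed has the overall sign of the surface integrals reversed; this is harmless here because the two displayed forms are mutually consistent and every application of the lemma in the paper is to a situation where the boundary integral vanishes.
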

\begin{proof}
	It is an easy exercise.
\end{proof}

\begin{lem}\label{l2}
	Suppose that
		\begin{equation*}
			\begin{aligned}
				&\Rot u = \alpha & &\text{in $\Omega$}, \\
				&\Div u = \beta & &\text{in $\Omega$}
		\end{aligned}
	\end{equation*}
	with either $u \cdot n\vert_S = 0$ or $u \times n\vert_S = 0$.
	Then
	\begin{equation*}
		\norm{u}_{H^{k + 1}(\Omega)} \leq c_{\Omega}\big(\norm{\alpha}_{H^k(\Omega)} + \norm{\beta}_{H^k(\Omega)}).
	\end{equation*}
\end{lem}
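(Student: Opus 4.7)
My plan is to reduce the lemma to a coercivity estimate for the first-order system $(\Rot,\Div)$ on $\Omega$ under the prescribed boundary datum, and then bootstrap to higher $k$ by induction. For $k=0$ I would start from the classical Friedrichs-type identity
\begin{equation*}
	\int_\Omega \abs{\nabla u}^2 \ud x = \int_\Omega \bigl(\abs{\Rot u}^2 + \abs{\Div u}^2\bigr) \ud x + \int_S B(u,n) \ud S,
\end{equation*}
where $B(u,n)$ is a bilinear form in $u$ whose coefficients involve the second fundamental form of $S$. This identity follows by expanding $\sum_{i,j}(\partial_i u_j)^2$, integrating by parts componentwise with the help of Lemma \ref{l1}, and rearranging so that the trace of the Jacobian and the antisymmetric part of the Jacobian become $\Div u$ and $\Rot u$ respectively.

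Under either condition $u\cdot n\vert_S = 0$ or $u\times n\vert_S = 0$, the contribution $B(u,n)$ is estimated by $c_\Omega\norm{u}^2_{L_2(S)}$, and a trace inequality followed by small-parameter absorption yields
\begin{equation*}
	\norm{\nabla u}_{L_2(\Omega)}^2 \le c_\Omega\bigl(\norm{\Rot u}_{L_2(\Omega)}^2 + \norm{\Div u}_{L_2(\Omega)}^2 + \norm{u}_{L_2(\Omega)}^2\bigr).
\end{equation*}
The residual $\norm{u}_{L_2}^2$ term is then removed by a standard compactness-and-contradiction argument, which hinges on the fact that on the simply connected cylindrical domain $\Omega$ the only vector field with $\Rot u = \Div u = 0$ together with either of the above boundary data is $u\equiv 0$.

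For the inductive step $k\to k+1$ I would differentiate the system once in a direction tangent to $S$; tangential differentiation preserves both boundary conditions, so $\partial_\tau u$ satisfies the hypotheses of the lemma with source data $\partial_\tau\alpha$, $\partial_\tau\beta$, and the inductive estimate applies. The remaining normal derivatives are recovered algebraically: the equations $\Rot u=\alpha$ and $\Div u=\beta$ can be solved locally at the boundary for $\partial_n u$ in terms of tangential derivatives of $u$ together with $\alpha$, $\beta$, which bounds the full $H^{k+1}$-norm by what is already controlled.

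The principal obstacle I foresee is the geometry of $S=S_1\cup S_2$: the outward normal has a jump across the edge where the lateral cylinder meets the caps, which makes tangential differentiation delicate in that neighborhood. I would circumvent this by a partition-of-unity argument separating collar neighborhoods of $S_1$ and $S_2$ (on each of which $n$ is locally constant, so differentiation in the remaining two coordinates is unambiguous), combined with standard interior elliptic estimates applied to the vector Poisson equation $-\triangle u = \Rot\alpha - \nabla\beta$ on the region bounded away from $S$. The second delicate point — triviality of the kernel of $(\Rot,\Div)$ with either boundary condition — is purely topological and follows from the simple connectedness of the solid cylinder together with the connectedness of its boundary.
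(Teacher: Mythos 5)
The paper does not prove this lemma at all: it simply cites Solonnikov's theory of overdetermined elliptic boundary-value problems \cite{sol1}. Your self-contained route --- the Friedrichs identity $\int_\Omega \abs{\nabla u}^2\, \ud x = \int_\Omega (\abs{\Rot u}^2 + \abs{\Div u}^2)\, \ud x + \int_S B(u,n)\, \ud S$, kernel triviality via the topology of the solid cylinder, and induction on $k$ by tangential differentiation with algebraic recovery of the normal derivatives --- is the standard alternative and is sound on domains with smooth boundary. The kernel argument is correct as stated: the space of harmonic fields with $u\cdot n\vert_S = 0$ is governed by the first Betti number and the space with $u\times n\vert_S = 0$ by the connectedness of $S$, and both obstructions vanish for the solid cylinder.

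The genuine gap is your treatment of the edge where $S_1$ meets $S_2$. A partition of unity "separating collar neighborhoods of $S_1$ and $S_2$" cannot do what you claim: every neighborhood of a point on the edge meets both boundary pieces, so there is no patch near the edge on which only one boundary condition is active, and tangential differentiation is not defined there. (Also, $n$ is not locally constant on $S_1$ --- the lateral surface is curved; it is constant only on the flat caps $S_2$.) Without an additional idea, both the $k=0$ coercivity estimate (whose boundary term $B(u,n)$ is not controlled across the edge) and the inductive step break down precisely in the edge neighborhood, which is where elliptic regularity for mixed boundary conditions genuinely can fail. The missing ingredient is the one the paper itself uses in the analogous situation in Lemma \ref{lem110}: partition only in $x_3$ and, for patches touching $S_2$, reflect the field across the flat caps $x_3 = \pm a$ (evenly in the tangential components, oddly in the third), so that near the edge the reflected problem sees only the smooth lateral boundary of an extended cylinder. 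With that reflection inserted, your argument closes; without it, the estimate near the edge is unjustified.
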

\begin{proof}
	For the proof we refer the reader \cite{sol1}, where general overdetermined elliptic systems are examined. In particular, the case of tangent components of $u$ is considered. 
\end{proof}

\begin{lem}\label{lem12}
	Let $v$, $\theta$ and $f_{,x_3}$ be given. Then the pair $(h,q)$ is a solution to the problem
	\begin{equation}\label{p5}
		\begin{aligned}
			&h_{,t} - (\nu + \nu_r)\triangle h + \nabla q = -v \cdot \nabla h - h \cdot \nabla v + 2\nu_r\Rot \theta + f_{,x_3} & &\text{in $\Omega^t$}, \\
			&\Div h = 0 & &\text{in $\Omega^t$}, \\
			&\Rot h \times n = 0, \quad h\cdot n = 0 & &\textrm{on $S_1^t$}, \\
			&h' = 0, \qquad h_{3,x_3} = 0 & &\textrm{on $S_2^t$}, \\
			&h\vert_{t = t_0} = h(t_0) & &\textrm{in $\Omega$}.
		\end{aligned}
	\end{equation}
\end{lem}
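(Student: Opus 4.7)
The plan is to set $q := p_{,x_3}$ and derive each line of \eqref{p5} by differentiating the corresponding line of \eqref{p1} (or \eqref{p2}) with respect to $x_3$, exploiting the fact that on the lateral boundary $S_1$ the unit normal $n$ is independent of $x_3$, while on the top/bottom $S_2$ one has $n = \pm e_3$.

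First I would handle the interior equations. Differentiating \eqref{p1}$_1$ with respect to $x_3$ and using that $\partial_{x_3}$ commutes with $\triangle$, $\nabla$ and $\Rot$ gives the momentum equation in \eqref{p5}$_1$, provided one rewrites
\begin{equation*}
  \partial_{x_3}(v\cdot \nabla v) = v\cdot \nabla h + h\cdot \nabla v,
\end{equation*}
which is immediate from $h = v_{,x_3}$. Differentiating \eqref{p1}$_2$ gives $\Div h = 0$. The initial condition $h\vert_{t=0} = h(t_0)$ is then just the definition of $h(t_0)$.

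The boundary conditions are the content of the lemma. On $S_1$ the normal $n$ depends only on $(x_1,x_2)$, so differentiating $v\cdot n = 0$ and $\Rot v \times n = 0$ in $x_3$ directly yields $h\cdot n = 0$ and $\Rot h \times n = 0$ on $S_1^t$. On $S_2$ the argument is less automatic and this is where I expect the main technical care. Since $v\cdot n = 0$ on $S_2$ reads $v_3 = 0$ along the planes $x_3 = \pm a$, the tangential derivatives vanish: $v_{3,x_1} = v_{3,x_2} = 0$ on $S_2$. Combined with the slip condition $\Rot v \times n = 0$ on $S_2$, i.e.\ $(\Rot v)_1 = v_{3,x_2} - v_{2,x_3} = 0$ and $(\Rot v)_2 = v_{1,x_3} - v_{3,x_1} = 0$, this gives $v_{1,x_3} = v_{2,x_3} = 0$ on $S_2$, which is precisely $h' = 0$.

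Finally, for $h_{3,x_3} = 0$ on $S_2$, I would use $\Div v = 0$ to write $v_{3,x_3} = -(v_{1,x_1} + v_{2,x_2})$ and differentiate once more in $x_3$:
\begin{equation*}
  h_{3,x_3} = v_{3,x_3 x_3} = -\bigl(h_{1,x_1} + h_{2,x_2}\bigr).
\end{equation*}
Since we have just shown $h' \equiv 0$ on $S_2$, its tangential derivatives along $S_2$ also vanish, so the right-hand side is zero on $S_2$. This finishes the derivation of \eqref{p5}. The only slightly delicate point is the $S_2$ analysis, but once one notices that $v_3$ vanishes identically along the plane $x_3 = \pm a$ (so its horizontal derivatives vanish there), the slip condition on $\Rot v$ and the divergence-free constraint close everything up cleanly.
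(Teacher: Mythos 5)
Your derivation is correct: differentiating \eqref{p1} and \eqref{p2} in $x_3$, using that $x_3$ is tangential to $S_1$ with $n$ independent of $x_3$ there, and closing the $S_2$ conditions via $v_3\vert_{S_2}=0$, the slip condition and $\Div v=0$, is exactly the standard argument. The paper itself does not reprint this proof but defers it to the companion reference \cite{2012arXiv1205.4046N}, where the same $x_3$-differentiation approach is used, so your proposal matches the intended proof.
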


\begin{lem}\label{lem4}
    Let $h$, $\omega$, $g'$ and $g_{,x_3}$ be given. Then the function $\theta$ is solution to the problem
    \begin{equation}\label{p6}
        \begin{aligned}
            &\theta_{,t} - \alpha \triangle \theta - \beta \nabla \Div \theta + 4\nu_r \theta = -h\cdot \nabla \omega - v \cdot \nabla \theta + 2 \nu_r \Rot h + g_{,x_3} & &\text{in $\Omega^t$}, \\
            &\theta = 0 & &\text{on $S_1^t$}, \\
            &\theta_3 = 0, \qquad \theta'_{,x_3} = -\frac{1}{\alpha}g' & &\text{on $S_2^t$}, \\
            &\theta\vert_{t = t_0} = \theta(t_0) & &\text{in $\Omega$}.
        \end{aligned}
    \end{equation}
\end{lem}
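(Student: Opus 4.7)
The plan is to obtain this problem by differentiating the original $\omega$-equation \eqref{p1}$_3$ with respect to $x_3$. Applying $\partial_{x_3}$ to both sides and using that all the linear operators $\partial_t, \triangle, \nabla\Div, \Rot$ commute with $\partial_{x_3}$, I would split the convective term as
\begin{equation*}
(v\cdot \nabla \omega)_{,x_3} = v_{,x_3}\cdot \nabla \omega + v\cdot \nabla \omega_{,x_3} = h\cdot \nabla \omega + v\cdot \nabla \theta,
\end{equation*}
and rewrite $(\Rot v)_{,x_3} = \Rot h$. Moving the convective terms to the right-hand side reproduces \eqref{p6}$_1$.

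For the boundary conditions I would argue as follows. On $S_1^t$ the relation $\omega = 0$ holds for every $x_3 \in [-a,a]$, and since $\partial_{x_3}$ is tangential to the lateral surface $S_1$, we may differentiate along it to obtain $\theta = \omega_{,x_3} = 0$ on $S_1^t$. On $S_2^t$ the relation $\theta_3 = \omega_{3,x_3} = 0$ is nothing but the original boundary condition \eqref{p2}$_4$. The initial condition $\theta\vert_{t=t_0} = \theta(t_0)$ is the definition of $\theta$.

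The only nontrivial condition is $\theta'_{,x_3} = -\frac{1}{\alpha}g'$ on $S_2^t$, which requires the second $x_3$-derivative of $\omega'$ — a \emph{normal} derivative on $S_2$ — that cannot be read off from the trace $\omega'\vert_{S_2} = 0$ directly. The idea is to take the tangential (primed) components of \eqref{p1}$_3$ and restrict them to $S_2$. On $S_2$ one has $\omega' = 0$, so $\omega'_{,t} = 0$ and the tangential second derivatives $\omega'_{,x_1x_1}, \omega'_{,x_2x_2}$ vanish, leaving $\triangle \omega' = \omega'_{,x_3x_3}$. Moreover $v_3 = 0$ on $S_2$ from $v\cdot n = 0$, and the tangential components of $v$ multiply tangential derivatives of $\omega'$ that vanish on $S_2$, so $v\cdot \nabla \omega' = 0$ on $S_2$. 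The term $4\nu_r\omega'$ vanishes trivially. For $\nabla'\Div\omega$: from $\omega' = 0$ and $\omega_{3,x_3} = 0$ on $S_2$ we get $\Div\omega = 0$ on $S_2$, and its tangential derivative therefore vanishes. Finally the condition $\Rot v \times n = 0$ on $S_2$ kills $(\Rot v)'$. What remains is the scalar identity $-\alpha \omega'_{,x_3x_3} = g'$ on $S_2$, i.e. $\theta'_{,x_3} = -\frac{1}{\alpha}g'$, as claimed.

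The main obstacle I anticipate is precisely this last computation: one must verify carefully that every term of the bulk equation except $-\alpha\omega'_{,x_3x_3}$ and $g'$ drops out when traced to $S_2$, and each cancellation must be justified either by an original boundary condition in \eqref{p2} or by differentiating a vanishing trace along a tangential direction. Once that bookkeeping is done the remainder of the lemma is merely the chain rule.
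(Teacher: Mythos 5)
Your derivation is correct and is the expected one: the paper itself does not prove Lemma~\ref{lem4} but defers it to the companion paper \cite{2012arXiv1205.4046N}, where the problem for $\theta$ is obtained in the same way --- by differentiating \eqref{p1}$_3$ in $x_3$ and reading the Neumann-type condition $\theta'_{,x_3} = -\frac{1}{\alpha}g'$ off the tangential components of the equation restricted to $S_2$. Your bookkeeping of the vanishing traces (using $\omega'\vert_{S_2} = 0$, $\omega_{3,x_3}\vert_{S_2} = 0$, $v\cdot n\vert_{S_2} = 0$ and $\Rot v \times n\vert_{S_2} = 0$, together with tangentiality of $\partial_{x_3}$ on $S_1$) is complete.
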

Proofs of Lemmas \ref{lem12} and \ref{lem4} are given in \cite{2012arXiv1205.4046N}. 

\begin{rem}
	Let us notice that for the functions $h$ and $\theta$ the Poincar\'e inequality holds. Indeed, since $h'$ vanishes on $S_2$ we only need to check if the integral of $h_3$ over $\Omega$ equals zero. We have
	\begin{equation*}
		\int_{\Omega} h_3\, \ud x = \int_{\Omega} v_{3,x_3}\, \ud x = \int_{S_2(x_3 = -a)} v_3\, \ud x' - \int_{S_2(x_3 = a)} v_3\, \ud x' = 0.
	\end{equation*}
	For $\theta_3$, which vanishes on $S_2$ it is also obvious. For $\theta'$ we simply calculate the mean value:
	\begin{equation*}
		\int_{\Omega} \theta'\, \ud x = \int_{\Omega} \omega'_{,x_3}\, \ud x = \int_{S_2(x_3 = -a)}\omega'\, \ud x' - \int_{S_2(x_3 = a)}\omega'\, \ud x' = 0,
	\end{equation*}
	which follows from \eqref{p2}$_4$.
\end{rem}

\begin{lem}\label{lem110}
	Suppose that $h$ is a solution to \eqref{p5}. Then
	\begin{equation*}
		\norm{h}_{H^2(\Omega)} \leq c_{\Omega}\norm{\triangle h}_{L_2(\Omega)}.
	\end{equation*}
\end{lem}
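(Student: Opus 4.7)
The plan is to reduce the estimate to two successive applications of Lemma \ref{l2}. Since $\Div h = 0$ by \eqref{p5}$_2$, the classical identity $-\triangle h = \Rot \Rot h - \nabla \Div h$ collapses to $-\triangle h = \Rot \Rot h$. Setting $w := \Rot h$, this means $\Rot w = -\triangle h$ and $\Div w = 0$. Provided Lemma \ref{l2} applies to $w$, one obtains $\norm{w}_{H^1(\Omega)} \leq c_\Omega \norm{\triangle h}_{L_2(\Omega)}$. A second application to $h$ itself, whose curl equals $w$ and whose divergence vanishes, would then yield $\norm{h}_{H^2(\Omega)} \leq c_\Omega \norm{w}_{H^1(\Omega)}$, closing the chain.

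What remains is to verify the boundary hypotheses, which turn out to be of mixed type across $S_1$ and $S_2$. For the field $w = \Rot h$: on $S_1^t$ one has $w \times n = \Rot h \times n = 0$ directly from \eqref{p5}$_3$. On $S_2^t$, where $n = (0,0,\pm 1)$, the condition $h' = 0$ forces the tangential derivatives of $h_1$ and $h_2$ along $S_2$ to vanish, so $w_3 = h_{2,x_1} - h_{1,x_2} = 0$ there; equivalently $w \cdot n = 0$ on $S_2^t$. For the field $h$: on $S_1^t$ one has $h \cdot n = 0$ from \eqref{p5}$_3$, while on $S_2^t$ the vanishing of $h' = (h_1,h_2)$ combined with $n = (0,0,\pm 1)$ gives $h \times n = (\pm h_2, \mp h_1, 0) = 0$. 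Thus on each piece of $S$ exactly one of the two alternatives in Lemma \ref{l2} is fulfilled.

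The main obstacle is that Lemma \ref{l2} as formulated assumes a single alternative on the whole of $S$, not a mixed decomposition into parts carrying the normal and the tangential condition respectively. However, the reference \cite{sol1} cited in the proof of Lemma \ref{l2} treats precisely this broader class of overdetermined elliptic systems, so the estimate carries over with the same structure. One also has to verify that the mixed problem has trivial kernel: if $\triangle h = 0$, then testing $\Rot \Rot h \cdot h$ over $\Omega$ and integrating by parts via Lemma \ref{l1} produces the boundary term $\int_S (\Rot h \times n)\cdot h\, \ud S$, which vanishes on $S_1^t$ because $\Rot h \times n = 0$ and on $S_2^t$ because $h' = 0$; this forces $\Rot h \equiv 0$, and then $h \equiv 0$ by Lemma \ref{l2} applied once more. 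Combining the two elliptic estimates gives
\[
    \norm{h}_{H^2(\Omega)} \leq c_\Omega \norm{\Rot h}_{H^1(\Omega)} = c_\Omega \norm{w}_{H^1(\Omega)} \leq c_\Omega \norm{\triangle h}_{L_2(\Omega)},
\]
as claimed.
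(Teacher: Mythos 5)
Your reduction to two div--curl estimates is the right skeleton, and your verification of the boundary data ($\Rot h\times n=0$ on $S_1$, $(\Rot h)\cdot n=0$ on $S_2$ via $h'\vert_{S_2}=0$; $h\cdot n=0$ on $S_1$, $h\times n=0$ on $S_2$) matches what the paper uses. But the step you flag as ``the main obstacle'' and then dismiss in one sentence --- that Lemma \ref{l2} extends to a boundary carrying the tangential condition on $S_1$ and the normal condition on $S_2$ --- is precisely the content of the paper's proof, and you have not supplied it. The paper does not get this from \cite{sol1}: it introduces a partition of unity $\sum_k\zeta_k(x_3)=1$, localizes $\Rot h$, flattens each boundary piece and invokes the half-space estimates of \cite[Theorem 5.5]{sol2}, and --- crucially --- handles the neighbourhood of the edge where $S_1$ meets $S_2$ by an explicit even/odd reflection of $h$ across $\{x_3=\pm a\}$, which is only admissible because of the specific conditions $h'\vert_{S_2}=0$ and $h_{3,x_3}\vert_{S_2}=0$ from \eqref{p5}$_4$. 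The cylinder is not smooth; the edge is where a blanket appeal to an elliptic regularity theorem on a smooth boundary breaks down, and your argument says nothing about it. The same objection applies to your second application of Lemma \ref{l2} to $h$ itself, which again has mixed conditions (the paper instead quotes the separately established inequality \eqref{eq120}).

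A second, smaller gap follows from the first: the localization produces commutator terms, so the estimate one actually obtains is $\norm{\Rot h}_{H^1(\Omega)}\le c_\Omega(\norm{\triangle h}_{L_2(\Omega)}+\norm{h}_{H^1(\Omega)})$, with a lower-order term on the right that your clean chain $\norm{h}_{H^2}\le c\norm{\Rot h}_{H^1}\le c\norm{\triangle h}_{L_2}$ silently omits. Removing it requires the additional step $\norm{h}^2_{H^1(\Omega)}\le c_\Omega\norm{\triangle h}^2_{L_2(\Omega)}$, which the paper proves by writing $\norm{\Rot h}^2_{L_2(\Omega)}=\int_\Omega\Rot\Rot h\cdot h\,\ud x$ (the boundary terms vanishing by \eqref{p5}$_{3,4}$) and applying H\"older and Young. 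Your kernel argument is in the right spirit but a uniqueness statement alone does not yield the quantitative absorption of the lower-order term.
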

\begin{proof}
	Consider 
	\begin{equation*}
		\begin{aligned}
			&\Rot \Rot h = \alpha & &\text{in $\Omega$}, \\
			&\Div h = 0 & &\text{in $\Omega$}, \\
			&\Rot h \times n = 0 & &\text{on $S_1$},\\
			&\Rot h \cdot n = 0 & &\text{on $S_2$}.
		\end{aligned}
	\end{equation*}
	Introduce a partition of unity $\sum_{k = 1}^N \zeta_k(x_3) = 1$. If we denote $\bar u = \Rot h \zeta_k$, then the above system becomes
	\begin{equation*}
		\begin{aligned}
			&\Rot \bar u = \bar \alpha + 2\nabla h\cdot \nabla \zeta_k + h \triangle \zeta_k & &\text{in $\supp\zeta_k\cap\Omega$}, \\
			&\Div \bar u = 0 & &\text{in $\supp\zeta_k\cap\Omega$}, \\
			&\bar u \times n = [0,0,h_1\zeta_{k,x_3}n_1 + h_2\zeta_{k,x_3}n_2] & &\text{on $\supp\zeta_k\cap S_1$}, \\
			&\bar u \cdot n = 0 & &\text{on $\supp\zeta_k\cap S_2$}.
		\end{aligned}
	\end{equation*}
	Note, that the boundary condition on $\supp\zeta_k\cap S_1$ is equal to zero which follows from \eqref{p5}$_3$.

	There are four cases to consider:
	\begin{description}
		\item[1. $\supp \zeta_k \cap S = \emptyset$.] The boundary conditions is $\bar u = 0$. From Lemma \ref{l2} we get
			\begin{equation}\label{eq115}
				\norm{\bar u}_{H^1(\supp\zeta_k\cap\Omega)} \leq c_{\Omega} \left(\norm{\bar \alpha}_{L_2(\supp\zeta_k\cap\Omega)} + \norm{\nabla h}_{L_2(\supp\zeta_k\cap\Omega)} + \norm{h}_{L_2(\supp\zeta_k\cap\Omega)}\right).
			\end{equation}
		\item[2. $\supp \zeta_k \cap S_1 \neq \emptyset$, $\supp \zeta_k \cap S_2 = \emptyset$.] On the set $\supp \zeta_k \cap S_1$ we see that $\bar u \times n = 0$, whereas on $\partial(\supp\zeta_k\cap\Omega)\setminus(\supp\zeta_k\cap S_1)$ we have $\bar u = 0$ which in particular means that $\bar u \times n = 0$. Next we transform the set $\supp \zeta_k\cap \Omega$ into the half-space and utilize Theorem 5.5 from \cite{sol2}. It yields \eqref{eq115} but in the half-space, i.e. 
			\begin{equation*}
				\norm{\bar u}_{H^1\left(\mathbb{R}^{n - 1}_+\right)} \leq c_{\Omega} \left(\norm{\bar \alpha}_{L_2\left(\mathbb{R}^{n - 1}_+\right)} + \norm{\nabla h}_{L_2\left(\mathbb{R}^{n - 1}_+\right)} + \norm{h}_{L_2\left(\mathbb{R}^{n - 1}_+\right)}\right).
			\end{equation*}

		\item[3. $\supp \zeta_k \cap S_1 \neq \emptyset$, $\supp \zeta_k \cap S_2 \neq \emptyset$.] Observe that $\bar u \cdot n\vert_{S_2} = \Rot \bar h \cdot n\vert_{S_2} = 0 \Leftrightarrow h_{2,x_1} - h_{1,x_2} = 0$. On the other hand we already know that $h'\vert_{S_2} = 0$ and $h_{3,x_3} = 0$ on $S_2$ (see \eqref{p5}$_4$). Therefore we can reflect the function $h$ outside the cylinder following the formula
			\begin{equation*}
				\check h (x) = \begin{cases} \bar h(x) & x_3 \in \overline{\supp\zeta_k\cap \Omega}, \\
													(\bar h'(\bar x),-\bar h_3(\bar x)) & x_3 \leq -a, \\
													(\bar h'(\tilde x), -\bar h_3(\tilde x)) & x_3 \geq a,\end{cases}
			\end{equation*}
			where $\bar x = (x',-2a - x_3)$ and $\tilde x = (x', 2a - x_3)$. We easily check that $\check u \times n = 0$ on $\supp\zeta_k\cap \check S_1$ and $\check u = 0$ on $\partial(\supp\zeta_k\cap \check \Omega)$. Then we follow Case $2$.
		\item[4. $\supp \zeta_k \cap S_1 = \emptyset$, $\supp \zeta_k \cap S_2 \neq \emptyset$.] On $\supp\zeta_k\cap S_2$ we have $\bar u \cdot n = 0$ and on $\partial(\supp\zeta_k\cap\Omega)$ we get $\bar u = 0$. Next we map $\supp \zeta_k\cap \Omega$ into the half-space and utilize Theorem 5.5 from \cite{sol2}. It yields \eqref{eq115} in the half-space.
	\end{description}
   Summing over $k$ yields 
    \begin{equation*}
        \norm{\Rot h}_{H^1(\Omega)} \leq c_{\Omega}\left(\norm{\alpha}_{L_2(\Omega)} + \norm{h}_{H^1(\Omega)}\right) = c_{\Omega} \left(\norm{\triangle h}_{L_2(\Omega)} + \norm{h}_{H^1(\Omega)}\right).
    \end{equation*}
    From the above inequality and using (see \cite[Rem. 8.3]{2012arXiv1205.4046N})
	\begin{equation}\label{eq120}
		\norm{h}_{H^{k + 1}(\Omega)} \leq c_{\Omega}\norm{\Rot h}_{H^k(\Omega)}
	\end{equation}
	we deduce that for $\alpha = \Rot h \in H^1(\Omega)$ 
	\begin{equation*}
		\norm{h}_{H^2(\Omega)} \leq c_{\Omega} \norm{\alpha}_{H^1(\Omega)} = c_{\Omega} \norm{\Rot h}_{H^1(\Omega)} \leq c_{\Omega}\left(\norm{\triangle h}_{L_2(\Omega)} + \norm{h}_{H^1(\Omega)}\right).
	\end{equation*}
	Eventually, we demonstrate that $\norm{h}_{H^1(\Omega)} \leq \norm{\triangle h}_{L_2(\Omega)}$. Again, from \eqref{eq120} it follows that
	\begin{equation*}
		\norm{h}^2_{H^1(\Omega)} \leq c_{\Omega}\norm{\Rot h}^2_{L_2(\Omega)} = c_{\Omega}\int_{\Omega} \Rot h\cdot \Rot h\, \ud x = \int_{\Omega} \Rot\Rot h\cdot h\, \ud x - \int_S \Rot h \times n \cdot h\, \ud S,
	\end{equation*}
	where we also used Lemma \ref{l1}. The boundary integral vanishes on $S_1$ due to boundary conditions \eqref{p5}$_3$. On $S_2$ it can be written in a form	
	\begin{equation*}
		-\int_{S_2} h\times n \cdot \Rot h\, \ud x
	\end{equation*}
	and since $h\times n\vert_{S_2} = [-h_2,h_1,0]\vert_{S_2} = 0$, where the last equality follows from \eqref{p5}$_4$, we get
	\begin{equation*}
		\norm{h}^2_{H^1(\Omega)} \leq c_{\Omega}\int_{\Omega}\Rot\Rot h\cdot h\, \ud x = - c_{\Omega}\int_{\Omega} \triangle h\cdot h\, \ud x.
	\end{equation*}
	Now we use the H\"older and the Young with $\epsilon$ inequalities, which results in
	\begin{equation*}
		\norm{h}^2_{H^1(\Omega)} \leq c_{\Omega} \norm{\triangle h}_{L_2(\Omega)}^2
	\end{equation*}
	and ends the proof.
\end{proof}

\begin{lem}\label{l4}
	Let $E_{v,\omega}(t) < \infty$  (see \eqref{eq14}$_1$). Then for any $t_0 \leq t \leq t_1$ we have
	\begin{equation*}
		\norm{v}_{V_2^0(\Omega^t)} + \norm{\omega}_{V_2^0(\Omega^t)} \leq c_{\alpha,\nu,I,\Omega} E_{v,\omega}(t).
    \end{equation*}
\end{lem}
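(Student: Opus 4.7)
The plan is to run the standard Leray-type energy estimate on the coupled pair $(v,\omega)$, exploiting two structural features of the problem: the slip/mixed boundary conditions \eqref{p2} are tailored so that every boundary trace produced by an integration by parts vanishes, and the zeroth-order term $4\nu_r\omega$ in $\eqref{p1}_3$ is exactly the right size to absorb the rotational coupling with $v$. The resulting differential inequality will then be integrated in time and supremized in $s\in(t_0,t]$ to produce the $V_2^0$ bound.

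First I would multiply $\eqref{p1}_1$ by $v$ and $\eqref{p1}_3$ by $\omega$ and integrate over $\Omega$. The convective terms vanish thanks to $\Div v=0$ together with $v\cdot n|_S=0$. Writing $-\triangle v=\Rot\Rot v$ and invoking Lemma \ref{l1}, the viscous term becomes $(\nu+\nu_r)\norm{\Rot v}_{L_2(\Omega)}^2$, the boundary trace $\int_S \Rot v\times n\cdot v\,\ud S$ being killed by $\eqref{p2}_2$. For the microrotation, integration by parts of $-\alpha\triangle\omega-\beta\nabla\Div\omega$ produces $\alpha\norm{\nabla\omega}_{L_2(\Omega)}^2+\beta\norm{\Div\omega}_{L_2(\Omega)}^2$; the boundary traces vanish on $S_1$ because $\omega|_{S_1}=0$ and on $S_2$ because $\omega'|_{S_2}=0$ forces $\omega_{1,x_1}|_{S_2}=\omega_{2,x_2}|_{S_2}=0$, which together with $\omega_{3,x_3}|_{S_2}=0$ yields $\Div\omega|_{S_2}=0$ and kills the gradient traces component by component.

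Next I add the two identities and deal with the rotational coupling. Using Lemma \ref{l1} one checks that $\int_\Omega\Rot\omega\cdot v\,\ud x=\int_\Omega\omega\cdot\Rot v\,\ud x$, since the surface remainder vanishes on $S_1$ (where $\omega=0$) and on $S_2$ (where $\omega\times n$ has only a vertical component while $v_3|_{S_2}=v\cdot n|_{S_2}=0$). Hence the coupling amounts to $4\nu_r\int_\Omega\Rot v\cdot\omega\,\ud x$, and Young's inequality gives
\begin{equation*}
4\nu_r\bigl|\textstyle\int_\Omega\Rot v\cdot\omega\,\ud x\bigr|\leq\nu_r\norm{\Rot v}_{L_2(\Omega)}^2+4\nu_r\norm{\omega}_{L_2(\Omega)}^2,
\end{equation*}
with the second summand exactly absorbed by the $4\nu_r\norm{\omega}_{L_2}^2$ already on the left; this is the key algebraic cancellation, after which $\nu_r$ disappears from the estimate—consistent with the constant $c_{\alpha,\nu,I,\Omega}$ in the statement.

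The forcing terms are estimated by H\"older and the embedding $H^1(\Omega)\hookrightarrow L_6(\Omega)$: $|\int_\Omega f\cdot v\,\ud x|\leq c_I\norm{f}_{L_{6/5}(\Omega)}\norm{v}_{H^1(\Omega)}$, and Lemma \ref{l2} applied with data $(\Rot v,0,v\cdot n|_S=0)$ gives $\norm{v}_{H^1(\Omega)}\leq c_\Omega\norm{\Rot v}_{L_2(\Omega)}$, so Young absorbs into the leftover $\nu\norm{\Rot v}^2$. For $g$ I use the Poincar\'e inequality (valid since $\omega|_{S_1}=0$) to bound $\norm{\omega}_{H^1}$ by $\norm{\nabla\omega}_{L_2}$ and absorb analogously. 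The resulting inequality
\begin{equation*}
\tfrac{\ud}{\ud t}\bigl(\norm{v}_{L_2}^2+\norm{\omega}_{L_2}^2\bigr)+c\bigl(\norm{\nabla v}_{L_2}^2+\norm{\nabla\omega}_{L_2}^2\bigr)\leq c\bigl(\norm{f}_{L_{6/5}}^2+\norm{g}_{L_{6/5}}^2\bigr)
\end{equation*}
is integrated from $t_0$ to $s\in(t_0,t]$ and the supremum in $s$ is taken, yielding the $V_2^0$ bound. The only delicate step, and the one I expect to require the most care, is the componentwise verification that all boundary traces generated by the integrations by parts actually vanish under the mixed conditions on $S_2$; once this bookkeeping is done, the estimate closes automatically.
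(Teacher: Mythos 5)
Your proof is correct and is essentially the argument the paper relies on: the paper itself only cites Lemma 8.1 of the companion paper \cite{2012arXiv1205.4046N}, but the identical energy computation (testing with $(v,\omega)$, killing the boundary traces via \eqref{p2}, and absorbing the rotational coupling into $\nu_r\norm{\Rot v}^2_{L_2(\Omega)}+4\nu_r\norm{\omega}^2_{L_2(\Omega)}$) appears verbatim in the proof of Lemma \ref{lem28} here, the only difference being that you estimate the forcing in $L_{6/5}(\Omega)$ as the definition of $E_{v,\omega}$ requires. One small correction to your bookkeeping: on $S_2$ the vector $\omega\times n=(\pm\omega_2,\mp\omega_1,0)$ is horizontal, not vertical, so the surface remainder there vanishes because $\omega'\vert_{S_2}=0$, not because $v\cdot n\vert_{S_2}=0$.
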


\begin{proof}
	This was proved in \cite[Lemma 8.1]{2012arXiv1205.4046N}.
\end{proof}

\section{Uniform estimates of solutions}\label{sec5}

We begin with certain refinement of the fundamental energy estimate for the function $v$ and $\omega$ in the norm $L_{\infty}(t_0,t_1;L_2(\Omega))$. 

\begin{lem}\label{lem28}
	Suppose that $v(t_0), \omega(t_0) \in L_2(\Omega)$ and $f,g \in L_{\infty}(t_0,t;L_2(\Omega))$. Then
	\begin{equation*}\tag{$\mathbf{A}$}
		\begin{aligned}
			\norm{v(t)}_{L_2(\Omega)}^2 + \norm{\omega(t)}_{L_2(\Omega)}^2 \leq c_{\alpha,\nu,\Omega}\left(\norm{f}_{L_{\infty}(t_0,t;L_2(\Omega))}^2 + \norm{g}_{L_{\infty}(t_0,t;L_2(\Omega))}^2\right)& \\ 
			+ \left(\norm{v(t_0)}^2_{L_2(\Omega)} + \norm{\omega(t_0)}^2_{L_2(\Omega)}\right)e^{-\frac{\min\{\alpha,\nu\}}{c_{\Omega}}(t - t_0)}.&
		\end{aligned}
	\end{equation*}
	If, in addition we assume that
	\begin{align*}
		\norm{f(t)}_{L_2(\Omega)} &\leq \norm{f(t_0)}_{L_2(\Omega)}e^{-(t - t_0)}, \\
		\norm{g(t)}_{L_2(\Omega)} &\leq \norm{g(t_0)}_{L_2(\Omega)}e^{-(t - t_0)}
	\end{align*}
	then $(\mathbf{A})$ implies 
	\begin{equation*}\tag{$\mathbf{B}$}
		\begin{aligned}
			\norm{v(t)}^2_{L_2(\Omega)} + \norm{\omega(t)}^2_{L_2(\Omega)} \leq \frac{c_{\Omega}}{\nu} \norm{f(t_0)}^2_{L_2(\Omega)}e^{-(t - t_0)} + \frac{c_{\Omega}}{\alpha} \norm{g(t_0)}^2_{L_2(\Omega)}e^{-(t - t_0)}& \\
		+ \left(\norm{v(t_0)}^2_{L_2(\Omega)} + \norm{\omega(t_0)}^2_{L_2(\Omega)}\right)e^{-{\frac{\min\{\alpha,\nu\}}{c_{\Omega}}}(t - t_0)}.&
		\end{aligned}	
	\end{equation*}
\end{lem}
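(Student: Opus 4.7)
The plan is to test the momentum equation \eqref{p1}$_1$ with $v$ and the microrotation equation \eqref{p1}$_3$ with $\omega$, integrate over $\Omega$, and add the two resulting identities. First I would verify that the convective contributions $\int_\Omega (v\cdot\nabla v)\cdot v\,\ud x$ and $\int_\Omega (v\cdot\nabla\omega)\cdot\omega\,\ud x$ both vanish, using $\Div v=0$ together with $v\cdot n=0$ on $S$; the pressure integral drops out for the same two reasons. For the viscous piece of the momentum equation I would expand $-\triangle v = \Rot\Rot v - \nabla\Div v$, kill the divergence term, and apply Lemma \ref{l1}: the boundary integral $\int_S \Rot v\times n\cdot v\,\ud S$ is zero by \eqref{p2}$_2$, producing $(\nu+\nu_r)\norm{\Rot v}^2_{L_2(\Omega)}$. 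A direct integration by parts for $\omega$ yields $\alpha\norm{\nabla\omega}^2_{L_2(\Omega)} + \beta\norm{\Div\omega}^2_{L_2(\Omega)}$, the boundary terms dying because $\omega=0$ on $S_1$ and $\omega'=\omega_{3,x_3}=0$ on $S_2$.

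Next I would handle the coupling. Using Lemma \ref{l1} once more on $2\nu_r\int_\Omega\Rot\omega\cdot v\,\ud x$, the resulting boundary correction $\int_S\omega\times n\cdot v\,\ud S$ vanishes (on $S_1$ because $\omega=0$; on $S_2$ because $\omega'=0$ forces $\omega\parallel n$, so $\omega\times n=0$), and the two coupling integrals merge into $4\nu_r\int_\Omega\Rot v\cdot\omega\,\ud x$. Young's inequality with parameter $\epsilon\sim\nu+\nu_r$ then absorbs the gradient part into $(\nu+\nu_r)\norm{\Rot v}^2_{L_2}$ and leaves a fraction of $4\nu_r\norm{\omega}^2_{L_2}$ strictly positive. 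The forcing terms $\int f\cdot v + \int g\cdot\omega$ I would split analogously, producing $c_{\alpha,\nu}\bigl(\norm{f}^2_{L_2(\Omega)} + \norm{g}^2_{L_2(\Omega)}\bigr)$ on the right and an innocuous multiple of $\norm{v}^2+\norm{\omega}^2$ that is absorbed by the surviving dissipation. Converting gradient dissipation into $L_2$-dissipation via Lemma \ref{l2} (applied with $\Div v=0$, $v\cdot n=0$ to get $\norm{v}^2_{L_2}\leq c_\Omega\norm{\Rot v}^2_{L_2}$) together with the standard Poincar\'e inequality for $\omega$ (which vanishes on $S_1$), I expect to arrive at
\begin{align*}
\frac{\ud}{\ud t}\bigl(\norm{v}^2_{L_2(\Omega)} &+ \norm{\omega}^2_{L_2(\Omega)}\bigr) + \frac{\min\{\alpha,\nu\}}{c_\Omega}\bigl(\norm{v}^2_{L_2(\Omega)} + \norm{\omega}^2_{L_2(\Omega)}\bigr) \\
&\leq c_{\alpha,\nu,\Omega}\bigl(\norm{f}^2_{L_2(\Omega)} + \norm{g}^2_{L_2(\Omega)}\bigr).
\end{align*}

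Setting $\kappa := \min\{\alpha,\nu\}/c_\Omega$, a direct ODE integration (Gronwall in its simplest form) gives
\begin{equation*}
\norm{v(t)}^2_{L_2(\Omega)} + \norm{\omega(t)}^2_{L_2(\Omega)} \leq e^{-\kappa(t-t_0)}\bigl(\norm{v(t_0)}^2_{L_2} + \norm{\omega(t_0)}^2_{L_2}\bigr) + c\int_{t_0}^t e^{-\kappa(t-s)}\bigl(\norm{f(s)}^2_{L_2} + \norm{g(s)}^2_{L_2}\bigr)\,\ud s.
\end{equation*}
For part $(\mathbf{A})$, I would pull the $L_\infty$-norms out of the convolution and use $\int_{t_0}^t e^{-\kappa(t-s)}\,\ud s \leq 1/\kappa$. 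For part $(\mathbf{B})$, the hypothesis yields $\norm{f(s)}^2_{L_2}\leq\norm{f(t_0)}^2_{L_2}e^{-2(s-t_0)}\leq\norm{f(t_0)}^2_{L_2}e^{-(s-t_0)}$, whereupon the convolution $\int_{t_0}^t e^{-\kappa(t-s)}e^{-(s-t_0)}\,\ud s$ evaluates in closed form to a multiple of $e^{-(t-t_0)}$ (with the constant absorbed into $c_\Omega/\nu$, resp.\ $c_\Omega/\alpha$, by tracking the $\epsilon$'s used on the forcing); the analogous manipulation for $g$ closes $(\mathbf{B})$. I expect the main subtlety to be in the second paragraph: tuning the Young-type constants so that, after absorption of the mixed term $4\nu_r\int\Rot v\cdot\omega$, each of the quantities $\norm{\Rot v}^2_{L_2}$, $\norm{\nabla\omega}^2_{L_2}$ and $\norm{\omega}^2_{L_2}$ retains a strictly positive coefficient compatible with the constants $\alpha,\nu,\nu_r$ appearing in the final statement.
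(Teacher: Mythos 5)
Your proposal is correct and follows essentially the same route as the paper: testing with $v$ and $\omega$, killing the convective, pressure and boundary terms via \eqref{p2}, absorbing the coupling term $4\nu_r\int_\Omega \Rot v\cdot\omega\,\ud x$ by Young's inequality, converting the dissipation to $L_2$-decay through Lemma \ref{l2} (the paper writes the $\omega$-dissipation as $\alpha\abs{\Rot\omega}^2+(\alpha+\beta)\abs{\Div\omega}^2$ rather than $\alpha\abs{\nabla\omega}^2+\beta\abs{\Div\omega}^2$, which is an immaterial difference), and then integrating with the factor $e^{\kappa t}$. One small caveat you share with the paper itself: the convolution $\int_{t_0}^t e^{-\kappa(t-s)}e^{-(s-t_0)}\,\ud s$ is a multiple of $e^{-\min\{1,\kappa\}(t-t_0)}$, so the stated exponent $e^{-(t-t_0)}$ in $(\mathbf{B})$ is only literally obtained when $\kappa\geq 1$.
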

To prove the above lemma we essentially follow the standard way of obtaining the basic energy estimates, however the final estimate is calculated in a slightly different way. 
\begin{proof}
	Multiplying \eqref{p1}$_{1,2}$ by $v$ and $\omega$ respectively and integrating over $\Omega$ yields
	\begin{multline*}
		\frac{1}{2}\Dt \left(\norm{v}^2_{L_2(\Omega)} + \norm{\omega}^2_{L_2(\Omega)}\right) - (\nu + \nu_r)\int_{\Omega} \triangle v\cdot v\, \ud x - \alpha\int_{\Omega} \triangle \omega \cdot \omega\, \ud x - \beta\int_{\Omega}\nabla \Div \omega\cdot \omega\, \ud x \\
		+ \int_{\Omega} \nabla p \cdot v\, \ud x + \int_{\Omega} v\cdot \nabla v \cdot v\, \ud x + \int_{\Omega} v\cdot \nabla \omega\cdot \omega\, \ud x + 4\nu_r\norm{\omega}^2_{L_2(\Omega)} \\
		= 2\nu_r\int_{\Omega}\Rot \omega\cdot v\, \ud x + 2\nu_r\int_{\Omega} \Rot v \cdot \omega\, \ud x + \int_{\Omega} f\cdot v\, \ud x + \int_{\Omega} g\cdot \omega\, \ud x.
	\end{multline*}
	In view of \eqref{p2}$_1$ and from $\Div v = 0$ we immediately get that
	\begin{align*}
		\int_{\Omega} v\cdot \nabla v \cdot v\, \ud x &= 0, \\
		\int_{\Omega} v\cdot \nabla \omega\cdot \omega\, \ud x &= 0, \\
		\int_{\Omega} \nabla p \cdot v\, \ud x &= 0.
	\end{align*}
	By the application of Lemma \ref{l1} we see that
	\begin{equation*}
		-\int_{\Omega} \triangle v \cdot v\, \ud x = \int_{\Omega} \Rot\Rot v\cdot v\, \ud x = \int_{\Omega} \abs{\Rot v}^2 + \int_S \Rot v\times n \cdot v\, \ud S
	\end{equation*}
	and
	\begin{multline*}
		-\int_{\Omega} \triangle \omega\cdot \omega, \ud x = \int_{\Omega} \Rot\Rot \omega\cdot \omega - \nabla \Div\omega \cdot \omega\, \ud x = \int_{\Omega} \abs{\Rot \omega}^2 + \abs{\Div \omega}^2\, \ud x \\
		+ \int_S \Rot\omega\times n\cdot\omega\,\ud S + \int_S \Div \omega (\omega\cdot n)\, \ud S.
	\end{multline*}
	In view of the boundary conditions \eqref{p2} all above boundary integrals vanish. On the right-hand side we have
	\begin{equation*}
		\int_{\Omega} \Rot \omega \cdot v\, \ud x = \int_{\Omega} \Rot v \cdot \omega\, \ud x + \int_S \omega\times n \cdot v\, \ud S,
	\end{equation*}
	where we applied Lemma \ref{l1}. Next we see that the boundary integral vanishes on $S_1$ since $\omega\vert_{S_1} = 0$ and on $S_2$ $\omega\times n = [-\omega_2,\omega_1,0] = 0$ holds, which follows from \eqref{p2}$_4$. Thus
	\begin{multline*}
		\frac{1}{2}\Dt \left(\norm{v}^2_{L_2(\Omega)} + \norm{\omega}^2_{L_2(\Omega)}\right) + (\nu + \nu_r)\norm{\Rot v}^2_{L_2(\Omega)} + \alpha\norm{\Rot \omega}^2_{L_2(\Omega)} + (\alpha + \beta)\norm{\Div \omega}^2_{L_2(\Omega)}  \\
	 	+ 4\nu_r\norm{\omega}^2_{L_2(\Omega)} = 4\nu_r\int_{\Omega} \Rot v \cdot \omega\, \ud x + \int_{\Omega} f\cdot v\, \ud x + \int_{\Omega} g\cdot \omega\, \ud x.
	\end{multline*}
	Now we make necessary estimates. From Lemma \ref{l2} for $u = \omega$ it follows that 
	\begin{align*}
		\frac{\alpha}{c_{\Omega}}\norm{\omega}_{H^1(\Omega)}^2 &\leq \alpha\left(\norm{\Rot \omega}^2_{L_2(\Omega)} + \norm{\Div \omega}^2_{L_2(\Omega)}\right), \\
		\frac{\nu}{c_{\Omega}}\norm{v}^2_{H^1(\Omega)} &\leq \nu\norm{\Rot v}^2_{L_2(\Omega)}.
	\end{align*}
	By the H\"older and the Young inequalities we obtain
	\begin{align*}
		4\nu_r\int_{\Omega} \Rot v \cdot \omega\, \ud x &\leq 4\nu_r\epsilon_1\norm{\Rot v}^2_{L_2(\Omega)} + \frac{\nu_r}{\epsilon_1}\norm{\omega}^2_{L_2(\Omega)}, \\
		\int_{\Omega} f\cdot v\, \ud x &\leq \epsilon_2\norm{v}^2_{L_2(\Omega)} + \frac{1}{4\epsilon_2}\norm{f}^2_{L_2(\Omega)} \leq \epsilon_2 \norm{v}^2_{H^1(\Omega)} + \frac{1}{4\epsilon_2}\norm{f}^2_{L_2(\Omega)}, \\
		\int_{\Omega} g\cdot \omega\, \ud x &\leq \epsilon_3\norm{\omega}^2_{L_2(\Omega)} + \frac{1}{4\epsilon_3}\norm{g}^2_{L_2(\Omega)} \leq \epsilon_3 \norm{\omega}^2_{H^1(\Omega)} + \frac{1}{4\epsilon_3}\norm{g}^2_{L_2(\Omega)}.
	\end{align*}
	Now we set $\epsilon_1 = \frac{1}{4}$, $\epsilon_2 = \frac{\nu}{2c_{\Omega}}$, $\epsilon_3 = \frac{\alpha}{2c_{\Omega}}$. Since $\norm{v}_{L_2(\Omega)} \leq \norm{v}_{H^1(\Omega)}$ and $\norm{\omega}_{L_2(\Omega)} \leq \norm{\omega}_{H^1(\Omega)}$ we finally get
	\begin{equation*}
		\frac{1}{2}\Dt \left(\norm{v}^2_{L_2(\Omega)} + \norm{\omega}^2_{L_2(\Omega)}\right) + \frac{\nu}{2c_{\Omega}}\norm{v}^2_{L_2(\Omega)} + \frac{\alpha}{2c_{\Omega}}\norm{\omega}^2_{L_2(\Omega)} \\
	 	\leq \frac{c_{\Omega}}{2\nu}\norm{f}^2_{L_2(\Omega)} + \frac{c_{\Omega}}{2\alpha}\norm{g}^2_{L_2(\Omega)}.
	\end{equation*}
	With $\bar c_{\alpha,\nu,\Omega} = \frac{\min\{\alpha,\nu\}}{c_{\Omega}}$, the above inequality becomes
	\begin{equation*}
		\Dt \left(\left(\norm{v}^2_{L_2(\Omega)} + \norm{\omega}^2_{L_2(\Omega)}\right)e^{\bar c_{\alpha,\nu,\Omega}t}\right) \leq \frac{c_{\Omega}}{\nu} \norm{f}^2_{L_2(\Omega)} e^{\bar c_{\alpha,\nu,\Omega}t} + \frac{c_{\Omega}}{\alpha}\norm{g}^2_{L_2(\Omega)}e^{\bar c_{\alpha,\nu,\Omega}t}.
	\end{equation*}
	Integrating with respect to $t \in (t_0,t_1)$ yields
	\begin{multline*}
		\left(\norm{v(t)}^2_{L_2(\Omega)} + \norm{\omega(t)}^2_{L_2(\Omega)}\right)e^{\bar c_{\alpha,\nu,\Omega}t} \leq \frac{c_{\Omega}}{\nu} \int_{t_0}^t\norm{f(s)}^2_{L_2(\Omega)} e^{\bar c_{\alpha,\nu,\Omega}s}\, \ud s \\
		+ \frac{c_{\Omega}}{\alpha}\int_{t_0}^t\norm{g(s)}^2_{L_2(\Omega)}e^{\bar c_{\alpha,\nu,\Omega}s}\, \ud s + \left(\norm{v(t_0)}^2_{L_2(\Omega)} + \norm{\omega(t_0)}^2_{L_2(\Omega)}\right)e^{\bar c_{\alpha,\nu,\Omega}t_0}
	\end{multline*}
	or equivalently
	\begin{multline*}
		\norm{v(t)}^2_{L_2(\Omega)} + \norm{\omega(t)}^2_{L_2(\Omega)} \leq \frac{c_{\Omega}}{\nu} e^{-\bar c_{\alpha,\nu,\Omega}t}\int_{t_0}^t\norm{f(s)}^2_{L_2(\Omega)} e^{\bar c_{\alpha,\nu,\Omega}s}\, \ud s \\
		+ \frac{c_{\Omega}}{\alpha}e^{-\bar c_{\alpha,\nu,\Omega}t}\int_{t_0}^t\norm{g(s)}^2_{L_2(\Omega)}e^{\bar c_{\alpha,\nu,\Omega}s}\, \ud s + \left(\norm{v(t_0)}^2_{L_2(\Omega)} + \norm{\omega(t_0)}^2_{L_2(\Omega)}\right)e^{-\bar c_{\alpha,\nu,\Omega}(t - t_0)},
	\end{multline*}
	which proofs assertion $(\mathbf{A})$.

	Next we use the assumption on the external data in the above inequality
	\begin{multline*}
		\frac{c_{\Omega}}{\nu} e^{-\bar c_{\alpha,\nu,\Omega}t}\int_{t_0}^t\norm{f(s)}^2_{L_2(\Omega)} e^{\bar c_{\alpha,\nu,\Omega}s}\, \ud s + \frac{c_{\Omega}}{\alpha}e^{-\bar c_{\alpha,\nu,\Omega}t}\int_{t_0}^t\norm{g(s)}^2_{L_2(\Omega)}e^{\bar c_{\alpha,\nu,\Omega}s}\, \ud s \\
		\leq \frac{c_{\Omega}}{\nu} e^{-\bar c_{\alpha,\nu,\Omega}t}\norm{f(t_0)}^2_{L_2(\Omega)}\int_{t_0}^t e^{-(s - t_0)} e^{\bar c_{\alpha,\nu,\Omega}s}\, \ud s \\
		+ \frac{c_{\Omega}}{\alpha}e^{-\bar c_{\alpha,\nu,\Omega}t}\norm{g(t_0)}^2_{L_2(\Omega)}\int_{t_0}^te^{-(s - t_0)}e^{\bar c_{\alpha,\nu,\Omega}s}\, \ud s \\
		\leq \frac{c_{\Omega}}{\nu} \norm{f(t_0)}^2_{L_2(\Omega)}e^{-(t - t_0)} + \frac{c_{\Omega}}{\alpha} \norm{g(t_0)}^2_{L_2(\Omega)}e^{-(t - t_0)}.
	\end{multline*}
	Thus
	\begin{multline*}
		\norm{v(t)}^2_{L_2(\Omega)} + \norm{\omega(t)}^2_{L_2(\Omega)} \leq \frac{c_{\Omega}}{\nu} \norm{f(t_0)}^2_{L_2(\Omega)}e^{-(t - t_0)} + \frac{c_{\Omega}}{\alpha} \norm{g(t_0)}^2_{L_2(\Omega)}e^{-(t - t_0)} \\
		+ \left(\norm{v(t_0)}^2_{L_2(\Omega)} + \norm{\omega(t_0)}^2_{L_2(\Omega)}\right)e^{-\bar c_{\alpha,\nu,\Omega}(t - t_0)},
	\end{multline*}
	which is precisely assertion $(\mathbf{B})$ of lemma.
\end{proof}

\begin{lem}\label{lem22}
	Suppose that $v(t_0), \omega(t_0) \in H^1(\Omega)$. Assume that
	\begin{align*}
		&\norm{f(t)}_{L_2(\Omega)} \leq \norm{f(t_0)}_{L_2(\Omega)}e^{-(t - t_0)}, & & &\norm{f_{,x_3}(t)}_{L_2(\Omega)} &\leq \norm{f_{,x_3}(t_0)}_{L_2(\Omega)}e^{-(t - t_0)},\\
		&\norm{g(t)}_{L_2(\Omega)} \leq \norm{g(t_0)}_{L_2(\Omega)}e^{-(t - t_0)}, & & &\norm{g_{,x_3}(t)}_{L_2(\Omega)} &\leq \norm{g_{,x_3}(t_0)}_{L_2(\Omega)}e^{-(t - t_0)}
	\end{align*}
	for $t_0 \leq t \leq t_1$.	Then
	\begin{equation*}
		\norm{v(t_1)}_{H^1(\Omega)} + \norm{\omega(t_1)}_{H^1(\Omega)} \leq c_{\nu,\alpha,\beta,I,P,\Omega}(t_1)\left(\norm{v(t_0)}_{H^1(\Omega)} + \norm{\omega(t_0)}_{H^1(\Omega)}\right),
	\end{equation*}
	where
	\begin{equation*}
		\lim_{t_1 \to \infty} c_{\nu,\alpha,\beta,I,P,\Omega}(t_1) = 0.
	\end{equation*}
\end{lem}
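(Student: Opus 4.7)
The plan is to prove exponential-in-$t_1$ decay of $\norm{v(t_1)}_{H^1}+\norm{\omega(t_1)}_{H^1}$ by chaining three energy estimates: first the basic $L_2$ energy (from Lemma \ref{lem28}), then the derivative $L_2$ energy of $h=v_{,x_3}$ and $\theta=\omega_{,x_3}$ (from equations \eqref{p5}, \eqref{p6}), and finally full $H^1$ control obtained by converting $L_2$ bounds on rotations and divergences via Lemma \ref{l2}. The exponential decay of $f,g,f_{,x_3},g_{,x_3}$ is the driver.

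The first step is immediate: Lemma \ref{lem28}(B) together with the exponential decay hypothesis on $f$ and $g$ already gives $\norm{v(t_1)}^2_{L_2}+\norm{\omega(t_1)}^2_{L_2}\leq K(t_1-t_0)(\norm{v(t_0)}^2_{L_2}+\norm{\omega(t_0)}^2_{L_2})$ with $K(s)\to 0$ as $s\to\infty$. In the second step I would test \eqref{p5}$_1$ with $h$ and \eqref{p6}$_1$ with $\theta$, integrate over $\Omega$, and add. The convective terms $v\cdot\nabla h\cdot h$ and $v\cdot\nabla\theta\cdot\theta$ vanish by $\Div v=0$ together with $v\cdot n=0$; the cross couplings $2\nu_r\Rot\theta$ and $2\nu_r\Rot h$ are handled by Lemma \ref{l1} in the spirit of Lemma \ref{lem28}, with the boundary integrals on $S_1$ and $S_2$ vanishing thanks to \eqref{p5}$_{3,4}$, \eqref{p6}$_{2,3}$, and the remark following Lemma \ref{lem4}. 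After applying the assumed exponential decay of $f_{,x_3}$ and $g_{,x_3}$, one arrives at a differential inequality of the form $\frac{d}{dt}(\norm{h}^2_{L_2}+\norm{\theta}^2_{L_2})+\gamma(\norm{h}^2_{L_2}+\norm{\theta}^2_{L_2})\leq Ce^{-2(t-t_0)}$, from which Gronwall yields exponential decay of $\norm{h(t_1)}_{L_2}+\norm{\theta(t_1)}_{L_2}$.

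The third step converts these $L_2$ bounds into full $H^1$ control. Since $\Div v=0$ and $v\cdot n=0$, Lemma \ref{l2} gives $\norm{v}_{H^1}\leq c_\Omega\norm{\Rot v}_{L_2}$; for $\omega$, the boundary conditions \eqref{p2}$_{3,4}$ imply $\omega\times n=0$ on $S$, whence $\norm{\omega}_{H^1}\leq c_\Omega(\norm{\Rot\omega}_{L_2}+\norm{\Div\omega}_{L_2})$. The pointwise-in-time bounds on $\norm{\Rot v}_{L_2}$, $\norm{\Rot\omega}_{L_2}$ and $\norm{\Div\omega}_{L_2}$ follow by testing \eqref{p1}$_1$ with $-\triangle v$ and \eqref{p1}$_3$ with $-\triangle\omega$; the resulting differential inequality has dissipation $\norm{\triangle v}^2_{L_2}+\norm{\triangle\omega}^2_{L_2}$ and forcing bounded by $\norm{f}_{L_2}+\norm{g}_{L_2}$ (which decay exponentially) plus nonlinear terms controlled by H\"older, Sobolev embedding and Lemma \ref{lem110}.

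The main obstacle will be the nonlinear term $h\cdot\nabla v\cdot h$ in the energy estimate for $h$ (and analogously $h\cdot\nabla\omega\cdot\theta$ for $\theta$). Unlike the transport terms $v\cdot\nabla h\cdot h$, these do not cancel through incompressibility, so they must be controlled by H\"older and Sobolev estimates of the type $\bigl|\int h\cdot\nabla v\cdot h\bigr|\leq\norm{h}_{L_3}^2\norm{\nabla v}_{L_3}$, with the right-hand side interpolated as $\norm{h}_{L_3}^2\leq c\norm{h}_{L_2}\norm{h}_{H^1}$ and then absorbed into the diffusion using the smallness of $\delta(T)$ and the $W^{2,1}_2$ bound from Theorem \ref{t1} (combined with Lemma \ref{lem110}). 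With the nonlinearities absorbed and the data's exponential decay invoked, all three exponentially decaying bounds combine to give the stated estimate with a multiplicative factor $c_{\nu,\alpha,\beta,I,P,\Omega}(t_1)$ that tends to zero as $t_1\to\infty$.
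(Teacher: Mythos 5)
Your third step is the heart of the matter and is, in outline, the paper's own argument: test \eqref{p1}$_1$ with $-\triangle v$ and \eqref{p1}$_3$ with a combination of $-\triangle\omega$ and $-\nabla\Div\omega$, derive a Gronwall inequality for $\norm{\Rot v}^2_{L_2(\Omega)}+\norm{\Rot\omega}^2_{L_2(\Omega)}+\norm{\Div\omega}^2_{L_2(\Omega)}$ with exponentially decaying forcing, and beat the factor $e^{c\int_{t_0}^{t_1}\norm{v}^2_{L_\infty(\Omega)}\ud s}$ coming from the convective terms by the uniform-in-$t_1$ bound on $\norm{v}_{W^{2,1}_2}$ from Theorem \ref{t1}. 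However, you have skipped the one term that does not yield to the boundary conditions and that consumes most of the paper's effort: when the coupling term $2\nu_r\Rot v$ is paired with $\nabla\Div\omega$ (unavoidable once you write $-\triangle\omega=\Rot\Rot\omega-\nabla\Div\omega$ or test with $-\beta\nabla\Div\omega$), integration by parts leaves the boundary integral $\int_S \Div\omega\,(\Rot v\cdot n)\,\ud S$, which does \emph{not} vanish under \eqref{p2}. The paper estimates it (the term $I_2$ in \eqref{eq23}) via the trace theorem and a Gagliardo--Nirenberg interpolation $\norm{\cdot}_{H^{1/2-\epsilon}(\Omega)}\leq \norm{\cdot}^{3/4}_{H^2(\Omega)}\norm{\cdot}^{1/4}_{L_2(\Omega)}$, absorbing the $H^2$ part into the dissipation; this is exactly what produces the extra $\norm{v}^2_{L_2(\Omega)}+\norm{\omega}^2_{L_2(\Omega)}$ forcing that makes Lemma \ref{lem28}$(\mathbf{B})$ (your step 1) necessary. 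Your description of the differential inequality as having ``forcing bounded by $\norm{f}_{L_2}+\norm{g}_{L_2}$ plus nonlinear terms'' omits this linear but non-vanishing boundary contribution, and without an idea for it the argument does not close.

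Separately, your step 2 and your declared ``main obstacle'' are misplaced for this lemma: the evolution estimates for $h=v_{,x_3}$ and $\theta=\omega_{,x_3}$, and in particular the term $h\cdot\nabla v$, play no role in the proof of Lemma \ref{lem22} --- they are the content of Lemma \ref{lem29}. Here $h$ and $\theta$ enter only through the data quantity $E_{h,\theta}(t_1)$ inside the Theorem \ref{t1} bound for $\int_{t_0}^{t_1}\norm{v(s)}^2_{L_\infty(\Omega)}\,\ud s$. A minor point: the inequality $\norm{v}^2_{H^2(\Omega)}\leq c_\Omega\norm{\triangle v}^2_{L_2(\Omega)}$ needed to absorb the nonlinearities is \eqref{eq110}, obtained from the elliptic estimate of Lemma \ref{l2} applied twice (to $\Rot v$ and to $v$); Lemma \ref{lem110} is the analogous statement for $h$ and is not the right reference here.
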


\begin{rem}
	We have assumed an exponential decay with respect to time on the external data and their derivative along the axis of the cylinder. The reason underlying  this assumption follows from the necessity to control the amount of the energy supplied to the system. If we consider the global in time existence, the supplied energy must be balanced by the rate of its lost due to friction, otherwise the system blows-up. However, we emphasize that this exponential decay with respect to time for the external data has only a local character. If $t_1$ denotes the end of the time interval under consideration, then
	\begin{equation*}
		\lim_{t \to t_1^-}\norm{f(t)}_{L_2(\Omega)} \neq \lim_{t\to t_1^+}\norm{f(t)}_{L_2(\Omega)}
	\end{equation*}
and analogously for the other external data. The right-hand side limit can be even much larger than the left-hand side limit. 

Note, that alternatively we may attempt to analyze carefully the direct correspondence between the energy input and loss which is likely to result in different assumption on the external data. However, it is beyond the scope of our study. For further reading we refer the reader to \cite{land}.

Note also, that if we had the Poincar\'e inequality for $f$ and $g$ with respect to $x_3$, we would be able to relax the assumption on the exponential decay of the external data and limit our considerations only to their rate of change. 
\end{rem}

\begin{proof}
	First we multiply \eqref{p1}$_{1,2}$ by $-\alpha\triangle v$ and $-\alpha\triangle \omega - \beta\nabla\Div \omega$ respectively and integrate over $\Omega$. It yields
	\begin{subequations}\label{eq22}
	\begin{multline}
		-\alpha\int_{\Omega} v_{,t}\cdot \triangle v\, \ud x + (\nu + \nu_r)\alpha\norm{\triangle v}_{L_2(\Omega)}^2 - \alpha\int_{\Omega} v\cdot \nabla v \cdot \triangle v\, \ud x - \alpha\int_{\Omega} \nabla p\cdot \triangle v\, \ud x \\
		= -2\nu_r\alpha\int_\Omega \Rot \omega \cdot \triangle v\, \ud x - \alpha\int_\Omega f\cdot \triangle v\, \ud x
	\end{multline}
	and
	\begin{multline}
		-\int_{\Omega} \omega_{,t} \cdot (\alpha\triangle \omega + \beta\nabla\Div\omega)\, \ud x + \int_{\Omega} \left(\alpha\triangle \omega+ \beta\nabla \Div\omega\right)^2\, \ud x \\
		- 4\nu_r\int_\Omega \omega\cdot (\alpha\triangle \omega + \beta\nabla\Div\omega)\, \ud x = - 2\nu_r\int_\Omega \Rot v \cdot (\alpha\triangle \omega + \beta\nabla\Div\omega)\, \ud x \\
		- \int_\Omega g\cdot (\alpha\triangle \omega + \beta\nabla\Div\omega)\, \ud x + \int_{\Omega} v\cdot \nabla \omega \cdot (\alpha\triangle\omega + \beta\nabla\Div\omega) \, \ud x.
	\end{multline}
	\end{subequations}
	By application of Lemma \ref{l1} we see that
	\begin{align*}
		&-\alpha\int_{\Omega} v_{,t}\cdot \triangle v\, \ud x  = \alpha\int_{\Omega} \Rot v_{,t}\cdot \Rot v\, \ud x + \alpha\int_{S} \Rot v\times n\cdot v_{,t}\, \ud S = \frac{1}{2}\Dt \alpha\int_{\Omega} \abs{\Rot v}^2\, \ud x, \\
		&-\alpha\int_{\Omega} \nabla p \cdot \triangle v\, \ud x = \alpha\int_{\Omega} \Rot \nabla p \cdot \Rot v\, \ud x + \alpha\int_S \Rot v\times n \cdot \nabla p\, \ud S = 0,
	\end{align*}
	where we used the boundary conditions \eqref{p2}$_2$.
	From the vector identity $-\triangle \omega = \Rot\Rot \omega - \nabla\Div \omega$ and Lemma \ref{l1} it follows that
	\begin{multline*}
		-\int_{\Omega} \omega_{,t} \cdot \triangle \omega\, \ud x = \int_{\Omega} \omega_{,t} \cdot \Rot\Rot \omega - \nabla \Div \omega\, \ud x = \frac{1}{2} \Dt \int_{\Omega} \abs{\Rot \omega}^2\, \ud x - \int_S \omega_{,t}\times n \cdot \Rot \omega\, \ud S \\
		+ \int_{\Omega}\omega_{,t} \cdot \nabla\Div \omega\, \ud x = \frac{1}{2} \Dt \int_{\Omega} \abs{\Rot \omega}^2 + \abs{\Div \omega}^2\, \ud x - \int_S \Div \omega \left(\omega_{,t} \cdot n\right)\, \ud S.
	\end{multline*}
	The boundary conditions \eqref{p2}$_4$ imply that $\omega'_{,x'}\vert_{S_2} = 0$. Thus, $\Div \omega\vert_{S_2} = 0$. Since $\omega\vert_{S_1} = 0 \Rightarrow \omega_{,t}\vert_{S_1} = 0$ and $\omega_{,t}\times n = [-\omega_{2,t},\omega_{1,t},0] = 0$ we see that the boundary integrals vanish. Therefore
	\begin{equation*}
		-\int_{\Omega} \omega_{,t}\cdot\left(\alpha\triangle \omega + \beta\nabla\Div \omega\right)\, \ud x = \frac{1}{2}\Dt\int_{\Omega} \alpha\abs{\Rot \omega}^2 + (\alpha + \beta)\abs{\Div \omega}^2\, \ud x.
	\end{equation*}
	In the same manner we get
	\begin{equation*}
		-\int_{\Omega} \omega \cdot \left(\alpha\triangle \omega + \beta\nabla\Div \omega\right)\, \ud x = \int_{\Omega} \alpha\abs{\Rot \omega}^2 + (\alpha + \beta)\abs{\Div \omega}^2\, \ud x.
	\end{equation*}
	Moving to the first term on the right-hand side we encounter a problem with the integral
	\begin{equation*}
		\beta\int_{\Omega} \Rot v\cdot \nabla\Div \omega\, \ud x.
	\end{equation*}
	We easily see that the application of Lemma \ref{l1} or integration by parts lead to a boundary integral, either
	\begin{equation*}
		\beta\int_S v\times n \cdot \nabla \Div \omega\, \ud S \qquad \text{or} \qquad \beta\int_S \Div \omega \Rot v \cdot n\, \ud S.
	\end{equation*}
	We are not able to compute these integrals, because the boundary conditions \eqref{p2} yield insufficient information. The first integral contains second-order derivatives of $\omega$, which makes it impossible to estimate by the data in any suitable norm. For the second we apply trace theorem and interpolation inequality, which we shall present later. For now we only write
	\begin{multline*}
		-\int_{\Omega} \Rot v \cdot (\alpha\triangle \omega + \beta\nabla\Div\omega)\, \ud x = \alpha\int_{\Omega} \Rot v \cdot \Rot \Rot \omega\, \ud x - (\alpha + \beta)\int_{\Omega} \Rot v \cdot \nabla \Div \omega\, \ud x\\
		= -\alpha\int_{\Omega}\triangle v \cdot \Rot \omega\, \ud x - \alpha\int_S \Rot v \times n \cdot \Rot \omega\, \ud S - (\alpha + \beta)\int_S \Div \omega\Rot v \cdot n\, \ud S \\
		= -\alpha\int_{\Omega}\triangle v \cdot \Rot \omega\, \ud x - (\alpha + \beta)\int_S \Div \omega\Rot v \cdot n\, \ud S.
	\end{multline*}
	Finally, we rewrite \eqref{eq22} in the following form
	\begin{multline}\label{eq23}
		\frac{1}{2}\Dt\left( \int_{\Omega} \alpha\abs{\Rot v}^2 + \alpha\abs{\Rot \omega}^2 + (\alpha + \beta)\abs{\Div \omega}^2\, \ud x\right) + (\nu + \nu_r)\alpha\norm{\triangle v}_{L_2(\Omega)}^2 \\
		+ \norm{\alpha \triangle \omega + \beta\nabla\Div\omega}_{L_2(\Omega)}^2 + 4\nu_r \alpha\int_{\Omega}\abs{\Rot \omega}^2 + (\alpha + \beta)\abs{\Div \omega}^2\, \ud x \\
		= -4\nu_r \alpha\int_{\Omega} \Rot \omega \cdot \triangle v\, \ud x - 2\nu_r(\alpha + \beta) \int_S \Div \omega\Rot v\cdot n\,\ud S + \int_{\Omega} v\cdot \nabla v\cdot \triangle v\, \ud x \\
		+ \int_{\Omega} v\cdot \nabla \omega \cdot (\alpha\triangle \omega + \beta\nabla\Div\omega)\, \ud x - \int_{\Omega} f\cdot \triangle v\,\ud x - \int_{\Omega} g\cdot (\alpha\triangle \omega + \beta\nabla\Div \omega)\, \ud x \\
		=: \sum_{k = 1}^6 I_k.
	\end{multline}
	We shall estimate $I_k$ by application of the H\"older and the Young inequalities. Two first one is obvious:
	\begin{equation*}
		I_1 \leq 4\nu_r \alpha \norm{\Rot \omega}_{L_2(\Omega)}\norm{\triangle v}_{L_2(\Omega)} \leq 4\nu_r\alpha \epsilon_1 \norm{\triangle v}^2_{L_2(\Omega)} + \frac{\nu_r\alpha}{\epsilon_1} \norm{\Rot \omega}^2_{L_2(\Omega)}.
	\end{equation*}
	For $I_2$ more work is required. From the trace theorem it follows that for $\epsilon > 0$
	\begin{multline*}
		I_2 \leq 2\nu_r(\alpha + \beta)\norm{\Div \omega}_{L_2(S)}\norm{\Rot v \cdot n}_{L_2(S)} \leq \nu_r(\alpha + \beta)\norm{\Div \omega}^2_{L_2(S)} + \nu_r(\alpha + \beta)\norm{\Rot v}^2_{L_2(S)} \\
		\leq c_{\Omega}\nu_r(\alpha + \beta) \norm{\Div \omega}^2_{H^{\frac{1}{2} - \epsilon}(\Omega)} + c_{\Omega}(\alpha + \beta)\norm{\Rot v}_{H^{\frac{1}{2} - \epsilon}(\Omega)}^2.
	\end{multline*}
	Now we use an inequality of Gagliardo-Nirenberg type (see \cite[Ch. 1, Rem. 1.2.1]{chol})
	\begin{align*}
		\norm{\Div \omega}^2_{H^{\frac{1}{2} - \epsilon}(\Omega)} &\leq \norm{\omega}_{H^2(\Omega)}^{2\theta}\norm{\omega}_{L_2(\Omega)}^{2(1 - \theta)}, \\ 
		\norm{\Rot v}_{H^{\frac{1}{2} - \epsilon}(\Omega)}^2 &\leq \norm{v}_{H^2(\Omega)}^{2\theta}\norm{v}_{L_2(\Omega)}^{2(1 - \theta)},
	\end{align*}
	where $\theta$ satisfies
	\begin{equation*}
		\frac{3}{2} - \epsilon - \frac{3}{2} \leq (1 - \theta)\left(-\frac{3}{2}\right) + \theta\left(2 - \frac{3}{2}\right) \quad \Leftrightarrow \quad \frac{3}{2} - \epsilon \leq 2\theta,
	\end{equation*}
	which allows us to set $\theta = \frac{3}{4}$. Thus, by the Young inequality with $\epsilon$
	\begin{align*}
		\norm{\Div \omega}^2_{H^{\frac{1}{2} - \epsilon}(\Omega)} &\leq \norm{\omega}_{H^2(\Omega)}^{\frac{3}{2}}\norm{\omega}_{L_2(\Omega)}^{\frac{1}{2}} \leq \epsilon_{21}\norm{\omega}^2_{H^2(\Omega)} + \frac{1}{4\epsilon_{21}}\norm{\omega}^2_{L_2(\Omega)} \\
		\norm{\Rot v}_{H^{\frac{1}{2} - \epsilon}(\Omega)}^2 &\leq \norm{v}_{H^2(\Omega)}^{\frac{3}{2}}\norm{v}_{L_2(\Omega)}^{\frac{1}{2}} \leq \epsilon_{22}\norm{v}^2_{H^2(\Omega)} + \frac{1}{4\epsilon_{22}}\norm{v}^2_{L_2(\Omega)}.
	\end{align*}
	Finally
	\begin{multline*}
		I_2 \leq c_{\Omega}\nu_r(\alpha + \beta)\epsilon_{21}\norm{\omega}^2_{H^2(\Omega)} + \frac{c_{\Omega}\nu_r(\alpha + \beta)}{4\epsilon_{21}}\norm{\omega}^2_{L_2(\Omega)} \\
		+ c_{\Omega}\nu_r(\alpha + \beta)\epsilon_{22}\norm{v}^2_{H^2(\Omega)} + \frac{c_{\Omega}\nu_r(\alpha + \beta)}{4\epsilon_{22}}\norm{v}^2_{L_2(\Omega)}.
	\end{multline*}
	For the nonlinear terms we have
	\begin{equation*}
		I_3 \leq \norm{\triangle v}_{L_2(\Omega)}\norm{\nabla v}_{L_2(\Omega)}\norm{v}_{L_{\infty}(\Omega)} \leq \epsilon_3 \norm{\triangle v}^2_{L_2(\Omega)} + \frac{1}{4\epsilon_3} \norm{\nabla v}^2_{L_2(\Omega)}\norm{v}^2_{L_{\infty}(\Omega)}
	\end{equation*}
	and
	\begin{multline*}
		I_4 \leq \norm{\alpha\triangle \omega + \beta\nabla\Div\omega}_{L_2(\Omega)}\norm{\nabla \omega}_{L_2(\Omega)}\norm{v}_{L_{\infty}(\Omega)} \\
		\leq \epsilon_4 (\alpha + \beta)\norm{\omega}^2_{H^2(\Omega)} + \frac{\alpha + \beta}{4\epsilon_4} \norm{\nabla \omega}^2_{L_2(\Omega)}\norm{v}^2_{L_{\infty}(\Omega)}.
	\end{multline*}
	The two last terms are estimated in a standard way
	\begin{align*}
		I_5 &\leq \epsilon_5 \norm{\triangle v}^2_{L_2(\Omega)} + \frac{1}{4\epsilon_5}\norm{f}^2_{L_2(\Omega)}, \\
		I_6 &\leq \epsilon_6 (\alpha + \beta)\norm{\omega}^2_{H^2(\Omega)} + \frac{\alpha + \beta}{4\epsilon_6}\norm{g}^2_{L_2(\Omega)}.
	\end{align*}
	Before we chose $\epsilon_i$, $i = 1,\ldots,6$, we justify the following inequality
	\begin{equation}\label{eq110}
		\norm{v}^2_{H^2(\Omega)} \leq c_{\Omega}\norm{\triangle v}^2_{L_2(\Omega)}.
	\end{equation}
	If we put $u = \Rot v$ (we see that $u\times n\vert_S = 0$) in Lemma \ref{l1} then we get
	\begin{equation*}
		\norm{\Rot v}_{H^1(\Omega)}^2 \leq c_{\Omega}\norm{\Rot\Rot v}^2_{L_2(\Omega)} = c_{\Omega}\norm{\triangle v}^2_{L_2(\Omega)}.
	\end{equation*}
	On the other hand, let now $u = v$ (we see that $u \cdot n\vert_S = 0$) in Lemma \ref{l1} and use the above inequality. Then
	\begin{equation*}
		\norm{v}_{H^2(\Omega)}^2 \leq c_{\Omega} \norm{\Rot v}^2_{H^1(\Omega)} \leq c_{\Omega}\norm{\triangle v}^2_{L_2(\Omega)}.
	\end{equation*}

	We set 
	\begin{align*}
		&\epsilon_1 = \frac{1}{4\alpha}, \\
		&\epsilon_{21}c_{\Omega}\nu_r(\alpha + \beta) = \epsilon_4(\alpha + \beta) = \epsilon_6(\alpha + \beta) = \frac{1}{6(c_{\Omega}(\alpha + \beta) + 1)}, \\
		&\epsilon_{22}c_{\Omega}^2\nu_r(\alpha + \beta) = \epsilon_3 = \epsilon_5 = \frac{\nu\alpha}{6}.
	\end{align*}
	Thus, we get from \eqref{eq23} that
	\begin{multline}\label{eq24}
		\frac{1}{2}\left(\Dt \int_{\Omega} \alpha\abs{\Rot v}^2 + \alpha\abs{\Rot \omega}^2 + (\alpha + \beta)\abs{\Div \omega}^2\, \ud x\right) + \frac{\nu\alpha}{2c_{\Omega}}\norm{v}_{H^2(\Omega)}^2 \\
		+ \frac{1}{2(c_{\Omega}(\alpha + \beta) + 1)} \norm{\omega}_{H^2(\Omega)}^2  \\
		\leq \frac{3c_{\Omega}^2\nu_r^2(\alpha + \beta)^2(c_{\Omega}(\alpha + \beta) + 1)}{2}\norm{\omega}^2_{L_2(\Omega)} + \frac{3c_{\Omega}^3\nu_r^2(\alpha + \beta)^2}{2\nu\alpha}\norm{v}^2_{L_2(\Omega)}\\
		+ \frac{3}{2\nu\alpha} \norm{\nabla v}^2_{L_2(\Omega)}\norm{v}^2_{L_{\infty}(\Omega)} + \frac{3(c_{\Omega}(\alpha + \beta) + 1)(\alpha + \beta)^2}{2} \norm{\nabla \omega}^2_{L_2(\Omega)}\norm{v}^2_{L_{\infty}(\Omega)} \\
		+ \frac{3}{2\nu\alpha}\norm{f}^2_{L_2(\Omega)} + \frac{3(c_{\Omega}(\alpha + \beta) + 1)(\alpha + \beta)^2}{2}\norm{g}^2_{L_2(\Omega)}.
	\end{multline}
	where we use \eqref{eq110} at the end. The next step is to estimate the $H^2(\Omega)$-norms on the left-hand side from below by $L_2(\Omega)$-norms of $\Rot v$ and $\Rot \omega + \Div \omega$. For any $u \in H^2(\Omega)$ we have 
	\begin{equation*}
		\frac{1}{2}\left(\norm{\Rot u}^2_{L_2(\Omega)} + \norm{\Div u}^2_{L_2(\Omega)}\right) \leq \norm{u}^2_{H^1(\Omega)} \leq \norm{u}^2_{H^2(\Omega)}.
	\end{equation*}
	Let
	\begin{equation*}
		c_1 = \min\left\{ \frac{\nu\alpha}{c_{\Omega}}, \frac{1}{(c_{\Omega}(\alpha + \beta) + 1)}\right\}.
	\end{equation*}
	Then
	\begin{multline}\label{eq27}
		 \frac{\nu\alpha}{2c_{\Omega}}\norm{v}_{H^2(\Omega)}^2 + \frac{1}{2(c_{\Omega}(\alpha + \beta) + 1)} \norm{\omega}_{H^2(\Omega)}^2 \geq \frac{c_1}{2} \left(\norm{v}^2_{H^2(\Omega)} + \norm{\omega}^2_{H^2(\Omega)}\right) \\
		\geq \frac{c_1}{4}\left(\norm{\Rot v}^2_{L_2(\Omega)} + \norm{\Rot \omega}^2_{L_2(\Omega)} + \norm{\Div \omega}^2_{L_2(\Omega)}\right) \\
		\geq \frac{c_1}{4(\alpha + \beta) + 2\delta}\left(\alpha\norm{\Rot v}^2_{L_2(\Omega)} + \beta\norm{\Rot \omega}^2_{L_2(\Omega)} + (\alpha + \beta)\norm{\Div \omega}^2_{L_2(\Omega)}\right),
	\end{multline}
	where $\delta \geq 1$ will be chosen later. 
	On the other hand, in view of Lemma \ref{l2} we already know that
	\begin{align*}
		\norm{\nabla v}^2_{L_2(\Omega)} &\leq \norm{v}^2_{H^1(\Omega)} \leq c_{\Omega}\norm{\Rot v}^2_{L_2(\Omega)}, \\
		\norm{\nabla \omega}^2_{L_2(\Omega)} &\leq \norm{\omega}^2_{H^1(\Omega)} \leq c_{\Omega}\left(\norm{\Rot \omega}^2_{L_2(\Omega)} + \norm{\Div \omega}^2_{L_2(\Omega)}\right).
	\end{align*}
	Let
	\begin{align*}
		c_2 &= \max\left\{\frac{3}{\nu\alpha},3(c_{\Omega}(\alpha + \beta) + 1)(\alpha + \beta)^2\right\}, \\
		c_3 &= \max\left\{3c_{\Omega}^2\nu_r^2(\alpha + \beta)^2(c_{\Omega}(\alpha + \beta) + 1),\frac{3c_{\Omega}^3\nu_r^2(\alpha + \beta)^2}{\nu\alpha},\frac{3}{\nu\alpha},3(c_{\Omega}(\alpha + \beta) + 1)(\alpha + \beta)^2\right\}.
	\end{align*}
	Denote
	\begin{equation*}
		X(t) = \alpha\norm{\Rot v(t)}^2_{L_2(\Omega)} + \alpha\norm{\Rot \omega(t)}^2_{L_2(\Omega)} + (\alpha + \beta)\norm{\Div \omega(t)}^2_{L_2(\Omega)}.
	\end{equation*}
	Then, combining \eqref{eq27} with \eqref{eq24} gives
	\begin{multline*}
		\frac{1}{2}\Dt X(t) + \frac{c_1}{4(\alpha + \beta) + 2\delta} X(t) \\
		\leq \frac{c_2}{2} \norm{v}^2_{L_{\infty}(\Omega)} X(t) + \frac{c_3}{2}\left(\norm{f}^2_{L_2(\Omega)} + \norm{g}^2_{L_2(\Omega)} + \norm{v}^2_{L_2(\Omega)} + \norm{\omega}^2_{L_2(\Omega)}\right),
	\end{multline*}
	which is equivalent to
	\begin{multline*}
		\Dt X(t) + \left(\frac{c_1}{2(\alpha + \beta) + \delta} - c_2\norm{v}^2_{L_{\infty}(\Omega)}\right)X(t) \\
		\leq c_3\left(\norm{f}^2_{L_2(\Omega)} + \norm{g}^2_{L_2(\Omega)} + \norm{v}^2_{L_2(\Omega)} + \norm{\omega}^2_{L_2(\Omega)}\right).
	\end{multline*}
	We may also write
	\begin{multline*}
		\Dt\left(X(t)e^{\frac{c_1}{2(\alpha + \beta) + \delta}t - c_2\int_{t_0}^t\norm{v(s)}^2_{L_{\infty}(\Omega)}\,\ud s}\right) \\
			\leq c_3\left(\norm{f}^2_{L_2(\Omega)} + \norm{g}^2_{L_2(\Omega)} + \norm{v}^2_{L_2(\Omega)} + \norm{\omega}^2_{L_2(\Omega)}\right)e^{\frac{c_1}{2(\alpha + \beta) + \delta}t - c_2\int_{t_0}^t\norm{v(s)}^2_{L_{\infty}(\Omega)}\,\ud s}.
	\end{multline*}
	Integrating with respect to $t \in (t_0,t_1)$ yields
	\begin{multline*}
		X(t_1)e^{\frac{c_1}{2(\alpha + \beta) + \delta}t_1 - c_2\int_{t_0}^{t_1}\norm{v(s)}^2_{L_{\infty}(\Omega)}\,\ud s} \\
		\leq c_3\int_{t_0}^{t_1}\left(\norm{f(t)}^2_{L_2(\Omega)} + \norm{g(t)}^2_{L_2(\Omega)} + \norm{v(t)}^2_{L_2(\Omega)} + \norm{\omega(t)}^2_{L_2(\Omega)}\right)\\
		\cdot e^{\frac{c_1}{2(\alpha + \beta) + \delta}t - c_2\int_{t_0}^t\norm{v(s)}^2_{L_{\infty}(\Omega)}\,\ud s}\, \ud t + X(t_0)e^{\frac{c_1}{2(\alpha + \beta) + \delta}t_0}.
	\end{multline*}
	Next we divide both sides by $e^{\frac{c_1}{2(\alpha + \beta) + \delta}t_1 - c_2\int_{t_0}^{t_1}\norm{v(s)}^2_{L_{\infty}(\Omega)}\,\ud s}$. It gives
	\begin{multline}\label{eq29}
		X(t_1) \leq c_3e^{-\frac{c_1}{2(\alpha + \beta) + \delta}t_1 + c_2\int_{t_0}^{t_1}\norm{v(s)}^2_{L_{\infty}(\Omega)}\,\ud s}\\
		\cdot \int_{t_0}^{t_1}\left(\norm{f(t)}^2_{L_2(\Omega)} + \norm{g(t)}^2_{L_2(\Omega)} + \norm{v(t)}^2_{L_2(\Omega)} + \norm{\omega(t)}^2_{L_2(\Omega)}\right)\cdot e^{\frac{c_1}{2(\alpha + \beta) + \delta}t - c_2\int_{t_0}^t\norm{v(s)}^2_{L_{\infty}(\Omega)}\,\ud s}\, \ud t \\
		+ X(t_0)e^{-\frac{c_1}{2(\alpha + \beta) + \delta}(t_1 - t_0) + c_2\int_{t_0}^{t_1}\norm{v(s)}^2_{L_{\infty}(\Omega)}\,\ud s}.
	\end{multline}
	Consider the integral with respect to $t$ on the right-hand side. Since $\norm{v(s)}^2_{L_{\infty}(\Omega)}$ is non-negative, we can write
	\begin{equation*}
		e^{\frac{c_1}{2(\alpha + \beta) + \delta}t - c_2 \int_{t_0}^t\norm{v(s)}^2_{L_{\infty}(\Omega)}} \leq e^{\frac{c_1}{2(\alpha + \beta) + \delta}t}.
	\end{equation*}
	By assumption and assertion $(\mathbf{B})$ from Lemma \ref{lem28} we see that
	\begin{multline}\label{eq28}
		c_3e^{-\frac{c_1}{2(\alpha + \beta) + \delta}t_1 + c_2\int_{t_0}^{t_1}\norm{v(s)}^2_{L_{\infty}(\Omega)}\,\ud s}\\
		\cdot \int_{t_0}^{t_1}\left(\norm{f(t)}^2_{L_2(\Omega)} + \norm{g(t)}^2_{L_2(\Omega)} + \norm{v(t)}^2_{L_2(\Omega)} + \norm{\omega(t)}^2_{L_2(\Omega)}\right)\cdot e^{\frac{c_1}{2(\alpha + \beta) + \delta}t - c_2\int_{t_0}^t\norm{v(s)}^2_{L_{\infty}(\Omega)}\,\ud s}\, \ud t \\
		\leq c_3e^{-\frac{c_1}{2(\alpha + \beta) + \delta}t_1 + c_2\int_{t_0}^{t_1}\norm{v(s)}^2_{L_{\infty}(\Omega)}\,\ud s}\\
		\cdot \int_{t_0}^{t_1}\left(\norm{f(t)}^2_{L_2(\Omega)} + \norm{g(t)}^2_{L_2(\Omega)} + \norm{v(t)}^2_{L_2(\Omega)} + \norm{\omega(t)}^2_{L_2(\Omega)}\right)\cdot e^{\frac{c_1}{2(\alpha + \beta) + \delta}t}\, \ud t \\
		\leq c_3e^{-\frac{c_1}{2(\alpha + \beta) + \delta}t_1 + c_2\int_{t_0}^{t_1}\norm{v(s)}^2_{L_{\infty}(\Omega)}\,\ud s} \\
		\cdot \left( \left(1 + \frac{c_{\Omega}}{\nu}\right)\norm{f(t_0)}^2_{L_2(\Omega)} + \left(1 + \frac{c_{\Omega}}{\alpha}\right)\norm{g(t_0)}^2_{L_2(\Omega)} + \norm{v(t_0)}^2_{L_2(\Omega)} + \norm{\omega(t_0)}^2_{L_2(\Omega)} \right) \\
		\cdot \int_{t_0}^{t_1} e^{\frac{c_1}{2(\alpha + \beta) + \delta}t} \cdot e^{-\min\left\{1,{\frac{\min\{\alpha,\nu\}}{c_{\Omega}}}\right\}(t - t_0)}\, \ud t.
	\end{multline}
	Now we chose $\delta$. Two cases may occur:
	\begin{description}
		\item[1. $\alpha < c_{\Omega} \vee \nu < c_{\Omega}$.] In this case
			\begin{equation*}
				\min\left\{1,\frac{\min\{\alpha,\nu\}}{c_{\Omega}}\right\} = \frac{\min\{\alpha,\nu\}}{c_{\Omega}},
			\end{equation*}
			so we chose such $\delta$ that it satisfies
			\begin{equation}\label{eq30}
				\frac{c_1}{2(\alpha + \beta) + \delta} - \frac{\min\{\alpha,\nu\}}{c_{\Omega}} < 0 \quad \Leftrightarrow \quad \frac{c_1c_{\Omega} - 2\min\{\alpha,\nu\}(\alpha + \beta)}{\min\{\alpha,\nu\}} < \delta.
			\end{equation}
			Then 
			\begin{multline*}
				 c_3e^{-\frac{c_1}{2(\alpha + \beta) + \delta}t_1 + c_2\int_{t_0}^{t_1}\norm{v(s)}^2_{L_{\infty}(\Omega)}\,\ud s}\int_{t_0}^{t_1}e^{\frac{c_1}{2(\alpha + \beta) + \delta}t} \cdot e^{-\min\left\{1,{\frac{\min\{\alpha,\nu\}}{c_{\Omega}}}\right\}(t - t_0)}\, \ud t \\
				= c_3e^{-\frac{c_1}{2(\alpha + \beta) + \delta}t_1 + c_2\int_{t_0}^{t_1}\norm{v(s)}^2_{L_{\infty}(\Omega)}\,\ud s + \frac{\min\{\alpha,\nu\}}{c_{\Omega}}t_0} \frac{1}{\frac{c_1}{2(\alpha + \beta) + \delta} - \frac{\min\{\alpha,\nu\}}{c_{\Omega}}}e^{\left(\frac{c_1}{2(\alpha + \beta) + \delta} - \frac{\min\{\alpha,\nu\}}{c_{\Omega}}\right)t}\Bigg\vert_{t_0}^{t_1} \\
				\leq c_3e^{-\frac{c_1}{2(\alpha + \beta) + \delta}(t_1 - t_0) + c_2\int_{t_0}^{t_1}\norm{v(s)}^2_{L_{\infty}(\Omega)}\,\ud s} \frac{1}{\frac{\min\{\alpha,\nu\}}{c_{\Omega}} - \frac{c_1}{2(\alpha + \beta) + \delta}}.
			\end{multline*}
		\item[2. $\alpha > c_{\Omega} \wedge \nu > c_{\Omega}$.] Now we have
			\begin{equation*}
				\min\left\{1,\frac{\min\{\alpha,\nu\}}{c_{\Omega}}\right\} = 1.
			\end{equation*}
			Since $c_{\Omega}(\alpha + \beta) + 1 > 1$ we see that $c_1 < 1$. It is enough to set $\delta = 1$. Then
			\begin{equation*}
				\frac{c_1}{2(\alpha + \beta) + 1} - 1 < 0
			\end{equation*}
			and
			\begin{multline*}
				 c_3e^{-\frac{c_1}{2(\alpha + \beta) + \delta}t_1 + c_2\int_{t_0}^{t_1}\norm{v(s)}^2_{L_{\infty}(\Omega)}\,\ud s}\int_{t_0}^{t_1}e^{\frac{c_1}{2(\alpha + \beta) + \delta}t} \cdot e^{-\min\left\{1,{\frac{\min\{\alpha,\nu\}}{c_{\Omega}}}\right\}(t - t_0)}\, \ud t \\
				= c_3e^{-\frac{c_1}{2(\alpha + \beta) + 1}t_1 + c_2\int_{t_0}^{t_1}\norm{v(s)}^2_{L_{\infty}(\Omega)}\,\ud s + t_0} \frac{1}{\frac{c_1}{2(\alpha + \beta) + 1} - 1}e^{\left(\frac{c_1}{2(\alpha + \beta) + 1} - 1\right)t}\bigg\vert_{t_0}^{t_1} \\
				\leq c_3e^{-\frac{c_1}{2(\alpha + \beta) + 1}(t_1 - t_0) + c_2\int_{t_0}^{t_1}\norm{v(s)}^2_{L_{\infty}(\Omega)}\,\ud s} \frac{1}{1 - \frac{c_1}{2(\alpha + \beta) + 1}}.
			\end{multline*}
	\end{description}
	In both cases we have obtained an estimate of the form
	\begin{equation*}
		c_{\nu,\alpha,\beta,\Omega}c_3e^{-\frac{c_1}{2(\alpha + \beta) + \delta}(t_1 - t_0) + c_2\int_{t_0}^{t_1}\norm{v(s)}^2_{L_{\infty}(\Omega)}\,\ud s},
	\end{equation*}
	where either $\delta = 1$ or $\delta$ satisfies \eqref{eq30}. Putting the above estimate into \eqref{eq28}, we get from \eqref{eq29} that
	\begin{multline*}
		X(t_1) \leq c_{\nu,\alpha,\beta,\Omega}c_3e^{-\frac{c_1}{2(\alpha + \beta) + \delta}(t_1 - t_0) + c_2\int_{t_0}^{t_1}\norm{v(s)}^2_{L_{\infty}(\Omega)}\,\ud s}\\
		\cdot c_{\nu,\alpha,\Omega}\left(\norm{f(t_0)}^2_{L_2(\Omega)} + \norm{g(t_0)}^2_{L_2(\Omega)} + \norm{v(t_0)}^2_{L_2(\Omega)} + \norm{\omega(t_0)}^2_{L_2(\Omega)}\right) \\
		+ X(t_0)e^{-\frac{c_1}{2(\alpha + \beta) + \delta}(t_1 - t_0) + c_2\int_{t_0}^{t_1}\norm{v(s)}^2_{L_{\infty}(\Omega)}\,\ud s}.
	\end{multline*}
	In view of Lemma \ref{l2} and an obvious inequality $\alpha < \alpha + \beta$ we can rewrite the above estimate in the form
	\begin{multline*}
		\frac{\alpha}{c_{\Omega}} \left(\norm{v(t_1)}^2_{H^1(\Omega)} + \norm{\omega(t_1)}^2_{H^1(\Omega)}\right) \leq c_{\nu,\alpha,\beta,\Omega}c_3e^{-\frac{c_1}{2(\alpha + \beta) + \delta}(t_1 - t_0) + c_2\int_{t_0}^{t_1}\norm{v(s)}^2_{L_{\infty}(\Omega)}\,\ud s}\\
		\cdot c_{\nu,\alpha,\Omega}\left(\norm{f(t_0)}^2_{L_2(\Omega)} + \norm{g(t_0)}^2_{L_2(\Omega)} + \norm{v(t_0)}^2_{L_2(\Omega)} + \norm{\omega(t_0)}^2_{L_2(\Omega)}\right) \\
		+ 2(\alpha + \beta)\left(\norm{v(t_0)}^2_{H^1(\Omega)} + \norm{\omega(t_0)}^2_{H^1(\Omega)}\right)e^{-\frac{c_1}{2(\alpha + \beta) + \delta}(t_1 - t_0) + c_2\int_{t_0}^{t_1}\norm{v(s)}^2_{L_{\infty}(\Omega)}\,\ud s}.
	\end{multline*}	
	Next observe that since $H^2(\Omega) \hookrightarrow L_\infty(\Omega)$, we have 
	\begin{equation*}
		\norm{v(s)}^2_{L_{\infty}(\Omega)} \leq c_I\norm{v(s)}^2_{H^2(\Omega)}.
	\end{equation*}
	Integrating this inequality with respect to $t \in (t_0,t_1)$ and utilizing the estimate from Theorem \ref{t1} we obtain
	\begin{multline*}
		\int_{t_0}^{t_1}\norm{v(s)}^2_{L_{\infty}(\Omega)}\, \ud s \leq c_I\norm{v}^2_{L_2(t_0,t_1;H^2(\Omega))} \leq c_I\norm{v}^2_{W^{2,1}_2(\Omega^{t_1})} \\
		\leq c_{\alpha,\nu,\nu_r,I,P,\Omega}\Big(E^3_{v,\omega}(t_1) + E^3_{h,\theta}(t_1) + \norm{f'}^3_{L_2(\Omega^{t_1})} + \norm{v(t_0)}^3_{H^1(\Omega)} + E_{v,\omega}(t_1) + \norm{v(t_0)}_{H^1(\Omega)}\Big)^2.
	\end{multline*}
	By assumption on $f$, $g$, their derivative with respect to $x_3$ we see that 
	\begin{multline*}
		E_{v,\omega}(t_1) + E_{h,\theta}(t_1)\leq c_{\Omega}\Big(\norm{v(t_0)}_{H^1(\Omega)} + \norm{\omega(t_0)}_{H^1(\Omega)} + \norm{f(t_0)}_{L_2(\Omega)} + \norm{g(t_0)}_{L_2(\Omega)} \\
		+ \norm{f_{,x_3}(t_0)}_{L_2(\Omega)} + \norm{g_{,x_3}(t_0)}_{L_2(\Omega)}\Big).
	\end{multline*}
	Thus
	\begin{multline*}
		\int_{t_0}^{t_1}\norm{v(s)}^2_{L_{\infty}(\Omega)}\, \ud s \leq \big(\norm{v(t_0)}_{H^1(\Omega)} + \norm{\omega(t_0)}_{H^1(\Omega)} + \norm{f(t_0)}_{L_2(\Omega)} + \norm{g(t_0)}_{L_2(\Omega)} \\
		+ \norm{f_{,x_3}(t_0)}_{L_2(\Omega)} + \norm{g_{,x_3}(t_0)}_{L_2(\Omega)} + 1\big)^6
	\end{multline*}
	and therefore for $t_1$ large enough we have
	\begin{equation*}
		-\frac{c_1}{2(\alpha + \beta) + \delta}(t_1 - t_0) + c_2\int_{t_0}^{t_1}\norm{v(s)}^2_{L_{\infty}(\Omega)}\,\ud s < 0,
	\end{equation*}
	which implies that
	\begin{equation*}
		\norm{v(t_1)}_{H^1(\Omega)} + \norm{\omega(t_1)}_{H^1(\Omega)} \leq c_{\nu,\alpha,\beta,I,P,\Omega}(t_1)\left(\norm{v(t_0)}_{H^1(\Omega)} + \norm{\omega(t_0)}_{H^1(\Omega)}\right),
	\end{equation*}
	where the function $c_{\nu,\alpha,\beta,I,P,\Omega}(t_1)$ has the property that $\lim_{t_1 \to \infty} c_{\nu,\alpha,\beta,I,P,\Omega}(t_1) = 0$. 
	This concludes the proof.	
\end{proof}

\begin{lem}\label{lem29}
	Suppose that $\Rot h(t_0), h(t_0), \omega(t_0) \in H^1(\Omega)$, $f_3\vert_{S_2} = 0$, $g'\vert_{S_2} = 0$. Assume that
	\begin{align*}
		&\norm{f(t)}_{L_2(\Omega)} \leq \norm{f(t_0)}_{L_2(\Omega)}e^{-(t - t_0)}, & & &\norm{f_{,x_3}(t)}_{L_2(\Omega)} &\leq \norm{f_{,x_3}(t_0)}_{L_2(\Omega)}e^{-(t - t_0)},\\
		&\norm{g(t)}_{L_2(\Omega)} \leq \norm{g(t_0)}_{L_2(\Omega)}e^{-(t - t_0)}, & & &\norm{g_{,x_3}(t)}_{L_2(\Omega)} &\leq \norm{g_{,x_3}(t_0)}_{L_2(\Omega)}e^{-(t - t_0)}
	\end{align*}
	for any $t_0 \leq t \leq t_1$. Then
	\begin{multline*}
		\norm{\Rot h(t_1)}^2_{L_2(\Omega)} + \norm{h(t_1)}^2_{L_2(\Omega)} + \norm{\theta(t_1)}^2_{L_2(\Omega)} \\
		\leq \norm{\Rot h(t_0)}^2_{L_2(\Omega)} + \norm{h(t_0)}^2_{L_2(\Omega)} + \norm{\theta(t_0)}^2_{L_2(\Omega)}.
	\end{multline*}
\end{lem}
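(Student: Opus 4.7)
\smallskip

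\noindent\textbf{Proof plan.} The plan is to derive differential inequalities for $\norm{h}_{L_2(\Omega)}^2$, $\norm{\Rot h}_{L_2(\Omega)}^2$, and $\norm{\theta}_{L_2(\Omega)}^2$ by testing systems \eqref{p5} and \eqref{p6} with suitable multipliers, and then to exploit the smallness of $\delta(t)$ (together with the exponential decay of $f_{,x_3}$ and $g_{,x_3}$) to turn the resulting estimates into the monotonicity statement. Concretely, multiply \eqref{p5}$_1$ by $h$ and integrate over $\Omega$: the convective term $\int_\Omega v\cdot\nabla h\cdot h\,\ud x$ vanishes because $\Div v=0$ and $v\cdot n\vert_S=0$, while the boundary terms arising from $\int_\Omega \triangle h\cdot h\,\ud x$ and from $\int_\Omega \Rot\theta\cdot h\,\ud x$ disappear thanks to the boundary conditions \eqref{p5}$_{3,4}$ and Lemma \ref{l1}. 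This gives an equation of the form
\begin{equation*}
\tfrac{1}{2}\Dt\norm{h}^2_{L_2(\Omega)}+(\nu+\nu_r)\norm{\Rot h}^2_{L_2(\Omega)} = -\int_\Omega h\cdot\nabla v\cdot h\,\ud x + 2\nu_r\int_\Omega\Rot\theta\cdot h\,\ud x + \int_\Omega f_{,x_3}\cdot h\,\ud x.
\end{equation*}

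Next, I would test \eqref{p5}$_1$ with $-\triangle h$ (using Lemma \ref{l1} to transfer one curl onto the time derivative, as in the proof of Lemma \ref{lem22}) to produce an equation for $\tfrac{1}{2}\Dt\norm{\Rot h}^2_{L_2(\Omega)}$, invoking Lemma \ref{lem110} to pass from $\norm{\triangle h}_{L_2(\Omega)}^2$ to $\norm{h}_{H^2(\Omega)}^2$; note that the pressure term $\int_\Omega\nabla q\cdot\triangle h\,\ud x$ vanishes because of the boundary conditions on $h$. In parallel, multiply \eqref{p6}$_1$ by $\theta$: here the only delicate boundary contribution comes from $\int_\Omega\triangle\theta\cdot\theta\,\ud x$ together with the inhomogeneous condition $\theta'_{,x_3}=-\tfrac{1}{\alpha}g'$ on $S_2$, but this produces a boundary integral bounded (via the trace theorem) by $\norm{g'}_{L_2(S_2)}\norm{\theta}_{L_2(S_2)}$, hence ultimately by $\norm{g_{,x_3}}_{L_2(\Omega)}\norm{\theta}_{H^1(\Omega)}$. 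The nonlinear pieces $\int_\Omega h\cdot\nabla\omega\cdot\theta\,\ud x$ and $\int_\Omega h\cdot\nabla v\cdot h\,\ud x$ are handled by H\"older, Sobolev ($H^1(\Omega)\hookrightarrow L_6(\Omega)$, $H^2(\Omega)\hookrightarrow L_\infty(\Omega)$) and the Young inequality.

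Summing the three inequalities with appropriate positive weights and using Lemma \ref{l2} together with the Poincar\'e inequality for $h$ and $\theta$ (which is valid by the remark following Lemma \ref{lem4}) yields
\begin{equation*}
\Dt Y(t) + c_1 Y(t) \leq c_2\bigl(\norm{v}_{L_\infty(\Omega)}^2+\norm{\omega}_{H^1(\Omega)}^2\bigr)Y(t) + c_3\bigl(\norm{f_{,x_3}(t)}^2_{L_2(\Omega)}+\norm{g_{,x_3}(t)}^2_{L_2(\Omega)}\bigr),
\end{equation*}
where $Y(t):=\norm{\Rot h(t)}_{L_2(\Omega)}^2+\norm{h(t)}_{L_2(\Omega)}^2+\norm{\theta(t)}_{L_2(\Omega)}^2$. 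The bulk of the work is then to absorb the factor $c_2(\norm{v}_{L_\infty(\Omega)}^2+\norm{\omega}_{H^1(\Omega)}^2)$ into the dissipation constant $c_1$; this is where smallness of $\delta(t_1)$ enters. Indeed, by Theorem \ref{t1} and Lemmas \ref{l4} and \ref{lem22} the quantities $\norm{v}_{L_2(t_0,t_1;L_\infty(\Omega))}^2$ and $\norm{\omega}_{L_2(t_0,t_1;H^1(\Omega))}^2$ are controlled by $E_{v,\omega}+E_{h,\theta}+$ lower-order data, and the smallness of $\delta$ makes the nonlinear contribution subdominant relative to $c_1$.

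The main obstacle, as I anticipate, is exactly this absorption step: keeping the multiplicative constant in front of $Y(t_0)$ equal to $1$ rather than merely bounded requires a careful choice of the weights in the linear combination and a delicate use of the exponential decay assumptions on $f,g,f_{,x_3},g_{,x_3}$ to show that the forcing integral $c_3\int_{t_0}^{t_1}(\norm{f_{,x_3}(s)}^2+\norm{g_{,x_3}(s)}^2)e^{c_1(s-t_1)}\,\ud s$ is dominated by the exponentially decaying contribution coming from $Y(t_0)e^{-c_1(t_1-t_0)/2}$, again by virtue of $\delta(t_1)$ being small. Once these two bookkeeping points are settled, integration of the Gronwall-type inequality over $(t_0,t_1)$ delivers the stated estimate without any extra multiplicative constant.
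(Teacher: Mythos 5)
Your plan follows essentially the same route as the paper: test \eqref{p5}$_1$ with $-\triangle h$ for the $\Rot h$ part, combine with the $L_2$ energy identities for $h$ and $\theta$ (which the paper simply imports from the companion arXiv paper rather than rederiving), form a weighted sum $Y(t)$, and close a Gronwall inequality whose nonlinear coefficient is time-integrable thanks to the energy estimate of Lemma \ref{l4} and the $W^{2,1}_2$ bound of Theorem \ref{t1}. The one place where your description of the mechanism goes astray is the final absorption step. It is not that ``smallness of $\delta$ makes the nonlinear contribution subdominant relative to $c_1$,'' nor that the forcing integral is dominated by $Y(t_0)e^{-c_1(t_1-t_0)/2}$: smallness of $\delta$ is only needed so that Theorem \ref{t1} applies and yields a bound on $\int_{t_0}^{t_1}\norm{\nabla v(s)}^2_{L_6(\Omega)}\,\ud s$ (and Lemma \ref{l4} on the $L_2$-gradient integrals) that is \emph{uniform in $t_1$}, because the exponential decay of $f,g$ makes $E_{v,\omega}(t_1)+E_{h,\theta}(t_1)$ bounded independently of $t_1$. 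After integrating the Gronwall inequality, the entire right-hand side (forcing term included, using the decay of $f_{,x_3},g_{,x_3}$) carries a prefactor of the form $e^{-\min\{1,c_1\}(t_1-t_0)+c_2A}$ with $A$ bounded, and the constant is driven below $1$ simply by taking $t_1-t_0$ large. In particular the conclusion is obtained only for $t_1\geq t^*$, not for all $t_1$, which is exactly why $T$ must be ``sufficiently large'' in Theorem \ref{t2}; with that correction your outline matches the paper's argument.
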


Note, that we have assumed that the $f_3$ and $g'$ vanish on $S_2$. Both functions appear in boundary integrals which need to be estimated. Although it is possible, but it leads to the presence of $c_I\left(\norm{h}^2_{L_2(\Omega)} + \norm{\theta}^2_{L_2(\Omega)}\right)$ on the right-hand side. Since $c_I$ is out of control we are unable to ensure the uniform estimates analogously as we did in Lemma \ref{lem28}. 

\begin{proof}[Proof of Lemma \ref{lem29}]
	We multiply \eqref{p5}$_1$ by $- \triangle h$ and integrate over $\Omega$, which yields
	\begin{multline}\label{eq9}
		- \int_{\Omega} h_{,t}\cdot \triangle h\, \ud x + (\nu + \nu_r)\int_{\Omega} \abs{\triangle h}^2\, \ud x - \int_{\Omega} \nabla q \cdot \triangle h\, \ud x \\
		= \int_{\Omega} v \cdot \nabla h \cdot \triangle h\, \ud x + \int_{\Omega} h \cdot \nabla v \cdot \triangle h\, \ud x - 2\nu_r \int_{\Omega} \Rot \theta \cdot \triangle h\, \ud x - \int_{\Omega} f,_{x_3}\cdot \triangle h\, \ud x.
	\end{multline}
	For the first term on the left-hand side we have
	\begin{multline*}
		- \int_{\Omega} h_{,t}\cdot \triangle h\, \ud x = \int_{\Omega} h_{,t} \cdot \Rot \Rot h\, \ud x \\
		= \frac{1}{2}\Dt \int_{\Omega}\abs{\Rot h}^2\, \ud x + \int_{S_1} h_{,t}\cdot \Rot h \times n\, \ud S_1 + \int_{S_2} h_{2,t} \left(\Rot h\right)_1 - h_{1,t} \left(\Rot h\right)_2\, \ud S_2 \\
		= \frac{1}{2}\Dt \int_{\Omega}\abs{\Rot h}^2\, \ud x
	\end{multline*}
	where the boundary integrals vanish due to the boundary conditions \eqref{p5}$_{3,4}$.

	The third term on the left-hand side in \eqref{eq9} is equal to
	\begin{multline*}
		\int_{\Omega} \nabla q\cdot \Rot\Rot h\, \ud x = \int_{\Omega} \Rot \nabla q \cdot \Rot h\, \ud x + \int_{S_1} \nabla q\cdot \Rot h \times n\, \ud S_1 \\
		+ \int_{S_2} q_{,x_2}\left(\Rot h\right)_1 - q_{,x_1}\left(\Rot h\right)_2\, \ud S_2 = 0,
	\end{multline*}
	which follows from \eqref{p5}$_3$ and 
	\begin{equation*}
		q\vert_{S_2} = -v\cdot \nabla v \cdot n\vert_{S_2} + f_3\vert_{S_2} + 2\nu_r\Rot \omega\cdot n\vert_{S_2} = f_3\vert_{S_2} = 0
	\end{equation*}
	because we assumed that $f_3\vert_{S_2} = 0$.

	Consider next the first term on the right-hand side in \eqref{eq9}. Since $\Div v = 0$ we may integrate by parts, which yields
	\begin{multline*}
		\int_{\Omega} v \cdot \nabla h \cdot \triangle h\, \ud x \leq \norm{\nabla v}_{L_6(\Omega)}\norm{\nabla h}_{L_3(\Omega)}\norm{\Rot h}_{L_2(\Omega)} + \int_S \left(v\cdot \nabla\right) h\cdot (\Rot h \times n)\, \ud S \\
		\leq \epsilon_1 c_I \norm{\nabla h}^2_{H^1(\Omega)} + \frac{1}{4\epsilon_1}\norm{\nabla v}^2_{L_6(\Omega)}\norm{\Rot h}^2_{L_2(\Omega)} \leq \epsilon_1 c_I \norm{h}^2_{H^2(\Omega)} \\
		+ \frac{1}{4\epsilon_1}\norm{\nabla v}^2_{L_6(\Omega)}\norm{\Rot h}^2_{L_2(\Omega)} \\
		\leq \epsilon_1 c_{\Omega,I} \norm{\triangle h}^2_{L_2(\Omega)} + \frac{1}{4\epsilon_1}\norm{\nabla v}^2_{L_6(\Omega)}\norm{\Rot h}^2_{L_2(\Omega)},
	\end{multline*}
	where we used that $\norm{\nabla h}_{L_3(\Omega)} \leq \norm{\nabla h}^{\frac{1}{2}}_{L_2(\Omega)}\norm{\nabla h}^{\frac{1}{2}}_{L_6(\Omega)} \leq c_I \norm{\nabla h}_{H^1(\Omega)}$. The last inequality above is justified in light of Lemma \ref{lem110}.

	For the second term on the right-hand side in \eqref{eq9} we simply have
	\begin{equation*}
		\norm{h}_{L_3(\Omega)}\norm{\nabla v}_{L_6(\Omega)}\norm{\triangle h}_{L_2(\Omega)} \leq \epsilon_2 \norm{\triangle h}_{L_2(\Omega)}^2 + \frac{1}{4\epsilon_2} \norm{h}_{L_2(\Omega)}\norm{h}_{L_6(\Omega)}\norm{\nabla v}_{L_6(\Omega)}^2.
	\end{equation*}

	The third term is estimated as follows
	\begin{equation*}
		2\nu_r\int_{\Omega}\Rot \theta\cdot \triangle h\, \ud x \leq 2\nu_r\norm{\Rot \theta}_{L_2(\Omega)}\norm{\triangle h}_{L_2(\Omega)} \\
		\leq 2\nu_r\epsilon_3 \norm{\triangle h}^2_{L_2(\Omega)} + \frac{\nu_r}{2\epsilon_3} \norm{\Rot \theta}^2_{L_2(\Omega)}.
	\end{equation*}

	Finally, for the fourth term we have
	\begin{equation*}
		\int_{\Omega}f,_{x_3}\cdot \triangle h\, \ud x \leq \norm{f,_{x_3}}_{L_2(\Omega)}\norm{\triangle h}_{L_2(\Omega)} \leq \epsilon_4 \norm{\triangle h}^2_{L_2(\Omega)} + \frac{1}{4\epsilon_4} \norm{f,_{x_3}}^2_{L_2(\Omega)}.
	\end{equation*}
	
	Setting $\epsilon_1 c_{\Omega,I} = \epsilon_2 = \epsilon_4 = \frac{\nu}{6}$ and $\epsilon_3 = \frac{1}{2}$ yields
	\begin{multline}\label{eq31}
		\frac{1}{2}\Dt \int_{\Omega}\abs{\Rot h}^2\, \ud x + \frac{\nu}{2}\norm{\triangle h}^2_{L_2(\Omega)} \leq \frac{3c_{\Omega,I}}{2\nu}\norm{\nabla v}^2_{L_6(\Omega)}\norm{\Rot h}^2_{L_2(\Omega)} \\
		+ \frac{3}{2\nu} \norm{h}_{L_2(\Omega)}\norm{h}_{L_6(\Omega)}\norm{\nabla v}_{L_6(\Omega)}^2 + \nu_r \norm{\Rot \theta}^2_{L_2(\Omega)} + \frac{3}{2\nu}\norm{f_{,x_3}}^2_{L_2(\Omega)} \\
		\leq \frac{3c_{I,\Omega}}{\nu}\norm{\nabla v}^2_{L_6(\Omega)}\norm{\Rot h}^2_{L_2(\Omega)} + \nu_r \norm{\Rot \theta}^2_{L_2(\Omega)} + \frac{3}{2\nu}\norm{f_{,x_3}}^2_{L_2(\Omega)},
	\end{multline}
	where in the last inequality we used $\norm{h}_{L_2(\Omega)}\norm{h}_{L_6(\Omega)} \leq c_I\norm{h}_{H^1(\Omega)}^2$ and subsequently utilized Lemma \ref{l2}.
	Since $\norm{\Rot \theta}^2_{L_2(\Omega)} \leq 2 \norm{\theta}^2_{H^1(\Omega)}$ we get from \eqref{eq31}
	\begin{multline}\label{eq116}
		\frac{1}{2}\Dt \int_{\Omega}\abs{\Rot h}^2\, \ud x + \frac{\nu}{2}\norm{\triangle h}^2_{L_2(\Omega)}  \\
		\leq \frac{3c_{\Omega,I}}{\nu}\norm{\nabla v}^2_{L_6(\Omega)}\norm{\Rot h}^2_{L_2(\Omega)} + 2\nu_r \norm{\theta}^2_{H^1(\Omega)} + \frac{3}{2\nu}\norm{f_{,x_3}}^2_{L_2(\Omega)}.
	\end{multline}
	In \cite[Proof of Lemma 8.4]{2012arXiv1205.4046N} we established
	\begin{multline*}
		\frac{1}{2}\Dt \left(\int_{\Omega}h^2\, \ud x + \int_{\Omega}\theta^2\, \ud x\right) + \frac{\nu}{2c_{\Omega}}\norm{h}^2_{H^1(\Omega)} + \frac{\alpha}{2c_{\Omega}} \norm{\theta}^2_{H^1(\Omega)} \\
		\leq \frac{c_{I,\Omega}}{\nu}\norm{\nabla v}^2_{L_2(\Omega)}\norm{h}^2_{L_3(\Omega)} + \frac{c_{I,\Omega}}{\alpha}\norm{\nabla \omega}_{L_2(\Omega)}^2\norm{h}_{L_3(\Omega)}^2 \\
		+ \frac{c_{I,\Omega}}{\nu}\norm{f_{,x_3}}^2_{L_{\frac{6}{5}}(\Omega)} + \frac{c_{I,\Omega}}{\alpha}\norm{g_{,x_3}}^2_{L_{\frac{6}{5}}(\Omega)}.
	\end{multline*}
	Multiplying it by $\frac{8\nu_rc_{\Omega}}{\alpha}$ and adding to \eqref{eq116} yields
	\begin{multline*}
		\frac{1}{2}\Dt \left(\int_{\Omega}\abs{\Rot h}^2 + \frac{8\nu_rc_{\Omega}}{\alpha}\abs{h}^2 + \frac{8\nu_rc_{\Omega}}{\alpha}{\theta}^2\, \ud x\right) + \frac{\nu}{2}\norm{\triangle h}^2_{L_2(\Omega)} + \frac{4\nu\nu_rc_{\Omega}}{\alpha}\norm{h}^2_{H^1(\Omega)} + 4\nu_r\norm{\theta}^2_{H^1(\Omega)} \\
		\leq \frac{8\nu_rc_{I,\Omega}}{\nu\alpha}\norm{\nabla v}^2_{L_2(\Omega)}\norm{h}^2_{L_3(\Omega)} + \frac{8\nu_rc_{I,\Omega}}{\alpha^2}\norm{\nabla \omega}_{L_2(\Omega)}^2\norm{h}_{L_3(\Omega)}^2 \\
		+ \frac{8\nu_rc_{I,\Omega}}{\nu\alpha}\norm{f_{,x_3}}^2_{L_{\frac{6}{5}}(\Omega)} + \frac{8\nu_rc_{I,\Omega}}{\alpha^2}\norm{g_{,x_3}}^2_{L_{\frac{6}{5}}(\Omega)} \\
		+ \frac{3c_{\Omega,I}}{\nu}\norm{\nabla v}^2_{L_6(\Omega)}\norm{\Rot h}^2_{L_2(\Omega)} + 2\nu_r \norm{\theta}^2_{H^1(\Omega)} + \frac{3}{2\nu}\norm{f_{,x_3}}^2_{L_2(\Omega)}.
	\end{multline*}
	Interpolation between $L_2$ and $L_6$ shows that $\norm{h}_{L_3(\Omega)} \leq \norm{h}_{L_2(\Omega)}^{\frac{1}{2}}\norm{h}_{L_6(\Omega)}^{\frac{1}{2}} \leq c_I \norm{h}_{H^1(\Omega)}$, because
	\begin{equation*}
		\frac{1}{3} = \frac{\kappa}{2} + \frac{1 - \kappa}{6} \quad \Leftrightarrow \quad \kappa = \frac{1}{2}.
	\end{equation*}
	In view of \eqref{eq120} we see that $\norm{h}_{H^1(\Omega)} \leq c_I \norm{\Rot h}_{L_2(\Omega)}$. Since
	\begin{align*}
		\norm{f_{,x_3}}^2_{L_{\frac{6}{5}}(\Omega)} &\leq c_{\Omega} \norm{f_{,x_3}}^2_{L_2(\Omega)}, \\
		\norm{g_{,x_3}}^2_{L_{\frac{6}{5}}(\Omega)} &\leq c_{\Omega} \norm{g_{,x_3}}^2_{L_2(\Omega)} 
	\end{align*}
	we get
	\begin{multline}\label{eq32}
		\frac{1}{2}\Dt \left(\int_{\Omega}\abs{\Rot h}^2 + \frac{8\nu_rc_{\Omega}}{\alpha}\abs{h}^2 + \frac{8\nu_rc_{\Omega}}{\alpha}{\theta}^2\, \ud x\right) + \frac{\nu}{2}\norm{\triangle h}^2_{L_2(\Omega)} + \frac{4\nu\nu_rc_{\Omega}}{\alpha}\norm{h}^2_{H^1(\Omega)} \\
		+ 2\nu_r\norm{\theta}^2_{H^1(\Omega)} \\	
		\leq \left(\frac{8\nu_rc_{I,\Omega}}{\nu\alpha}\norm{\nabla v}^2_{L_2(\Omega)} + \frac{8\nu_rc_{I,\Omega}}{\alpha^2}\norm{\nabla \omega}_{L_2(\Omega)}^2 + \frac{3c_{\Omega,I}}{\nu}\norm{\nabla v}^2_{L_6(\Omega)}\right)\norm{\Rot h}_{L_2(\Omega)}^2 \\
		+ \frac{16\nu_rc_{I,\Omega} + 3\alpha}{2\nu\alpha}\norm{f_{,x_3}}^2_{L_2(\Omega)} + \frac{8\nu_rc_{I,\Omega}}{\alpha^2}\norm{g_{,x_3}}^2_{L_2(\Omega)}.
	\end{multline}
	From \eqref{eq120} and natural embeddings it follows
	\begin{equation*}
		\frac{\nu}{2}\norm{\triangle h}^2_{L_2(\Omega)} \geq \frac{\nu}{2c_{\Omega}}\norm{h}^2_{H^2(\Omega)} \geq \frac{\nu}{2c_{\Omega}}\norm{h}^2_{H^1(\Omega)} \geq \frac{\nu}{4c_{\Omega}}\norm{\Rot h}^2_{L_2(\Omega)}.
	\end{equation*}
	In the same manner
	\begin{align*}
		\frac{4\nu\nu_rc_{\Omega}}{\alpha}\norm{h}^2_{H^1(\Omega)} &\geq \frac{4\nu\nu_rc_{\Omega}}{\alpha}\norm{h}^2_{L_2(\Omega)}, \\
		2\nu_r\norm{\theta}^2_{H^1(\Omega)} &\geq 2\nu_r\norm{\theta}^2_{L_2(\Omega)}.
	\end{align*}
	Thus
	\begin{multline*}
		\frac{\nu}{4c_{\Omega}}\norm{\Rot h}^2_{L_2(\Omega)} + \frac{4\nu\nu_rc_{\Omega}}{\alpha}\norm{h}^2_{L_2(\Omega)} + 2\nu_r\norm{\theta}^2_{L_2(\Omega)} \\
		\geq \frac{\nu}{4c_{\Omega}} \left(\norm{\Rot h}^2_{L_2(\Omega)} + \frac{16\nu_rc_{\Omega}^2}{\alpha}\norm{h}^2_{L_2(\Omega)} + \frac{8\nu_rc_{\Omega}}{\nu}\norm{\theta}^2_{L_2(\Omega)}\right) \\
		= \frac{\nu}{4c_{\Omega}} \left(\norm{\Rot h}^2_{L_2(\Omega)} + 2c_{\Omega}\frac{8\nu_rc_{\Omega}}{\alpha}\norm{h}^2_{L_2(\Omega)} + \frac{\alpha}{\nu}\frac{8\nu_rc_{\Omega}}{\alpha}\norm{\theta}^2_{L_2(\Omega)}\right) \\
		\geq \frac{\min\left\{\nu,2\nu c_{\Omega},\alpha\right\}}{4c_{\Omega}} \left(\norm{\Rot h}^2_{L_2(\Omega)} + \frac{8\nu_rc_{\Omega}}{\alpha}\norm{h}^2_{L_2(\Omega)} + \frac{8\nu_rc_{\Omega}}{\alpha}\norm{\theta}^2_{L_2(\Omega)}\right).
	\end{multline*}
	Denote
	\begin{equation*}
		X(t) = \norm{\Rot h(t)}^2_{L_2(\Omega)} + \frac{8\nu_rc_{\Omega}}{\alpha}\norm{h(t)}^2_{L_2(\Omega)} + \frac{8\nu_rc_{\Omega}}{\alpha}\norm{\theta(t)}^2_{L_2(\Omega)}.
	\end{equation*}
	Then \eqref{eq32} becomes
	\begin{multline*}
		\frac{1}{2}\Dt X(t) + \frac{\min\left\{\nu,2\nu c_{\Omega},\alpha\right\}}{4c_{\Omega}} X(t) \\
		\leq \left(\frac{8\nu_rc_{I,\Omega}}{\nu\alpha}\norm{\nabla v}^2_{L_2(\Omega)} + \frac{8\nu_rc_{I,\Omega}}{\alpha^2}\norm{\nabla \omega}_{L_2(\Omega)}^2 + \frac{3c_{\Omega,I}}{\nu}\norm{\nabla v}^2_{L_6(\Omega)}\right)X(t) \\
		+ \frac{16\nu_rc_{I,\Omega} + 3\alpha}{2\nu\alpha}\norm{f_{,x_3}}^2_{L_2(\Omega)} + \frac{8\nu_rc_{I,\Omega}}{\alpha^2}\norm{g_{,x_3}}^2_{L_2(\Omega)}.
	\end{multline*}
	Let
	\begin{align*}
		c_1 &= \frac{\min\left\{\nu,2\nu c_{\Omega},\alpha\right\}}{2c_{\Omega}}, \\
		c_2 &= \max\left\{\frac{16\nu_rc_{I,\Omega}}{\nu\alpha},\frac{16\nu_rc_{I,\Omega}}{\alpha^2},\frac{6c_{\Omega,I}}{\nu}\right\}, \\
		c_3 &= \max\left\{\frac{16\nu_rc_{I,\Omega} + 3\alpha}{\nu\alpha},\frac{16\nu_rc_{I,\Omega}}{\alpha^2}\right\}.
	\end{align*}
	Then, the last inequality implies that
	\begin{multline*}
		\Dt \left(X(t)e^{c_1t - c_2\left(\int_{t_0}^t \norm{\nabla v(s)}^2_{L_2(\Omega)} + \norm{\nabla \omega(s)}^2_{L_2(\Omega)} + \norm{\nabla v(s)}^2_{L_6(\Omega)}\, \ud s\right)}\right) \\
		\leq c_3 \left(\norm{f_{,x_3}}^2_{L_2(\Omega)} + \norm{g_{,x_3}}^2_{L_2(\Omega)}\right)e^{c_1t - c_2\left(\int_{t_0}^t \norm{\nabla v(s)}^2_{L_2(\Omega)} + \norm{\nabla \omega(s)}^2_{L_2(\Omega)} + \norm{\nabla v(s)}^2_{L_6(\Omega)}\, \ud s\right)}.
	\end{multline*}
	Integrating with respect to $t \in (t_0,t_1)$ yields
	\begin{multline*}
		X(t_1)e^{c_1t_1 - c_2\left(\int_{t_0}^{t_1} \norm{\nabla v(s)}^2_{L_2(\Omega)} + \norm{\nabla \omega(s)}^2_{L_2(\Omega)} + \norm{\nabla v(s)}^2_{L_6(\Omega)}\, \ud s\right)} \\
		\leq c_3 \int_{t_0}^{t_1}\left(\norm{f_{,x_3}(t)}^2_{L_2(\Omega)} + \norm{g_{,x_3}(t)}^2_{L_2(\Omega)}\right)e^{c_1t - c_2\left(\int_{t_0}^t \norm{\nabla v(s)}^2_{L_2(\Omega)} + \norm{\nabla \omega(s)}^2_{L_2(\Omega)} + \norm{\nabla v(s)}^2_{L_6(\Omega)}\, \ud s\right)}\, \ud t \\
		+ X(t_0)e^{c_1t_0}.
	\end{multline*}
	Since
	\begin{equation*}
		\int_{t_0}^t \norm{\nabla v(s)}^2_{L_2(\Omega)} + \norm{\nabla \omega(s)}^2_{L_2(\Omega)} + \norm{\nabla v(s)}^2_{L_6(\Omega)}\, \ud s \geq 0
	\end{equation*}
	we get that
	\begin{multline*}
		X(t_1) \leq c_3e^{ - c_1t_1 + c_2\left(\int_{t_0}^{t_1} \norm{\nabla v(s)}^2_{L_2(\Omega)} + \norm{\nabla \omega(s)}^2_{L_2(\Omega)} + \norm{\nabla v(s)}^2_{L_6(\Omega)}\, \ud s\right)}  \\
		\cdot \int_{t_0}^{t_1}\left(\norm{f_{,x_3}(t)}^2_{L_2(\Omega)} + \norm{g_{,x_3}(t)}^2_{L_2(\Omega)}\right)e^{c_1t}\, \ud t \\
		+ X(t_0) e^{ - c_1(t_1 - t_0) + c_2\left(\int_{t_0}^{t_1} \norm{\nabla v(s)}^2_{L_2(\Omega)} + \norm{\nabla \omega(s)}^2_{L_2(\Omega)} + \norm{\nabla v(s)}^2_{L_6(\Omega)}\, \ud s\right)}.
	\end{multline*}
	By assumption on $f_{,x_3}$ and $g_{,x_3}$ we obtain
	\begin{multline}\label{eq33}
		X(t_1) \leq c_3e^{ - c_1t_1 + c_2\left(\int_{t_0}^{t_1} \norm{\nabla v(s)}^2_{L_2(\Omega)} + \norm{\nabla \omega(s)}^2_{L_2(\Omega)} + \norm{\nabla v(s)}^2_{L_6(\Omega)}\, \ud s\right)}  \\
		\cdot \int_{t_0}^{t_1}e^{c_1t}e^{-(t - t_0)}\, \ud t \left(\norm{f_{,x_3}(t_0)}^2_{L_2(\Omega)} + \norm{g_{,x_3}(t_0)}^2_{L_2(\Omega)}\right)\\
		+ X(t_0) e^{ - c_1(t_1 - t_0) + c_2\left(\int_{t_0}^{t_1} \norm{\nabla v(s)}^2_{L_2(\Omega)} + \norm{\nabla \omega(s)}^2_{L_2(\Omega)} + \norm{\nabla v(s)}^2_{L_6(\Omega)}\, \ud s\right)}.
	\end{multline}
	Next we estimate the integral with respect to $t$
	\begin{multline*}
		e^{-c_1t_1} \int_{t_0}^{t_1} e^{c_1t}e^{-(t - t_0)}\, \ud t = e^{-c_1t_1 + t_0}\int_{t_0}^{t_1} e^{t(c_1 - 1)}\, \ud t = e^{-c_1t_1 + t_0}\frac{1}{c_1 - 1} e^{t(c_1 - 1)}\bigg\vert_{t_0}^{t_1} \\
		= e^{-c_1t_1 + t_0}\frac{1}{c_1 - 1} \left(e^{t_1(c_1 - 1)} - e^{t_0(c_1 - 1)}\right) \\
		\leq \begin{cases}
					e^{-c_1t_1 + t_0}\frac{1}{c_1 - 1} e^{c_1t_1 - t_1} = \frac{1}{c_1 - 1}e^{-(t_1 - t_0)} & \text{for } c_1 > 1, \\
					e^{-c_1t_1 + t_0}\frac{1}{1 - c_1} e^{c_1t_0 - t_0} = \frac{1}{1 - c_1}e^{-c_1(t_1 - t_0)} & \text{for } c_1 < 1.
				\end{cases}
	\end{multline*}
	Consider the quantity
	\begin{equation}\label{eq130}
		\int_{t_0}^{t_1} \norm{\nabla v(s)}^2_{L_2(\Omega)} + \norm{\nabla \omega(s)}^2_{L_2(\Omega)} + \norm{\nabla v(s)}^2_{L_6(\Omega)}\, \ud s.
	\end{equation}
	In view of Lemma \ref{l4} we see
	\begin{multline*}
		\int_{t_0}^{t_1} \norm{\nabla v(s)}^2_{L_2(\Omega)} + \norm{\nabla \omega(s)}^2_{L_2(\Omega)}\, \ud s \leq c_{\nu,\alpha,I,\Omega}E_{v,\omega}^2(t_1) \\
		\leq \int_{t_0}^{t_1}\norm{f(s)}^2_{L_2(\Omega)} + \norm{g(s)}^2_{L_2(\Omega)}\, \ud s + \norm{v(t_0)}^2_{L_2(\Omega)} + \norm{\omega(t_0)}^2_{L_2(\Omega)} \\
		\leq \int_{t_0}^{t_1}e^{-(s - t_0)}\, \ud s \left(\norm{f(t_0)}^2_{L_2(\Omega)} + \norm{g(t_0)}^2_{L_2(\Omega)}\right)+ \norm{v(t_0)}^2_{L_2(\Omega)} + \norm{\omega(t_0)}^2_{L_2(\Omega)} \\
		\leq \norm{f(t_0)}^2_{L_2(\Omega)} + \norm{g(t_0)}^2_{L_2(\Omega)} + \norm{v(t_0)}^2_{L_2(\Omega)} + \norm{\omega(t_0)}^2_{L_2(\Omega)}.
	\end{multline*}
	To estimate the last term in \eqref{eq130} we use the estimate from Theorem \ref{t1}
\begin{multline*}
		\int_{t_0}^{t_1}\norm{\nabla v(s)}^2_{L_6(\Omega)}\, \ud s \leq c_I\norm{v}^2_{L_2(t_0,t_1;H^2(\Omega))} \leq c_I\norm{v}^2_{W^{2,1}_2(\Omega^{t_1})} \\
		\leq c_{\alpha,\nu,\nu_r,I,P,\Omega}\Big(E_{v,\omega}(t_1) + E_{h,\theta}(t_1) + \norm{f'}_{L_2(\Omega^{t_1})} + \norm{v(t_0)}_{H^1(\Omega)} + 1 \Big)^6.
	\end{multline*}
	By assumption on $f$, $g$, their derivative with respect to $x_3$ we see that 
	\begin{multline*}
		E_{v,\omega}(t_1) + E_{h,\theta}(t_1)\leq c_{\Omega}\Big(\norm{v(t_0)}_{H^1(\Omega)} + \norm{\omega(t_0)}_{H^1(\Omega)} + \norm{f(t_0)}_{L_2(\Omega)} + \norm{g(t_0)}_{L_2(\Omega)} \\
		+ \norm{f_{,x_3}(t_0)}_{L_2(\Omega)} + \norm{g_{,x_3}(t_0)}_{L_2(\Omega)}\Big).
	\end{multline*}
	Thus,
	\begin{multline*}
		\int_{t_0}^{t_1}\norm{\nabla v(s)}^2_{L_6(\Omega)}\, \ud s \leq \Big(\norm{v(t_0)}_{H^1(\Omega)} + \norm{\omega(t_0)}_{H^1(\Omega)} + \norm{f(t_0)}_{L_2(\Omega)} + \norm{g(t_0)}_{L_2(\Omega)} \\
		+ \norm{f_{,x_3}(t_0)}_{L_2(\Omega)} + \norm{g_{,x_3}(t_0)}_{L_2(\Omega)} + 1\Big)^6
	\end{multline*}
	and therefore for $t_1$ large enough we have
	\begin{equation*}
		-\min\{1,c_1\}(t_1 - t_0) + c_2\int_{t_0}^{t_1} \norm{\nabla v(s)}^2_{L_2(\Omega)} + \norm{\nabla \omega(s)}^2_{L_2(\Omega)} + \norm{\nabla v(s)}^2_{L_6(\Omega)}\, \ud s < 0,
	\end{equation*}
	which combined with \eqref{eq33} implies that
	\begin{equation*}
		X(t_1) \leq c_{\alpha,\nu,\nu_r,I,P,\Omega}(t_1) X(t_0),
	\end{equation*}
	where the function $c_{\alpha,\nu,\nu_r,I,P,\Omega}(t_1)$ has the property
	\begin{equation*}
		\lim_{t_1 \to \infty} c_{\alpha,\nu,\nu_r,I,P,\Omega}(t_1) = 0.
	\end{equation*}
	From the above inequality we immediately get that
	\begin{multline*}
		\min\left\{1,\frac{8\nu_rc_{\Omega}}{\alpha}\right\} \left(\norm{\Rot h(t_1)}^2_{L_2(\Omega)} + \norm{h(t_1)}^2_{L_2(\Omega)} + \norm{\theta(t_1)}^2_{L_2(\Omega)}\right) \\
		\leq c_{\alpha,\nu,\nu_r,I,P,\Omega}\max\left\{1,\frac{8\nu_rc_{\Omega}}{\alpha}\right\} \left(\norm{\Rot h(t_0)}^2_{L_2(\Omega)} + \norm{h(t_0)}^2_{L_2(\Omega)} + \norm{\theta(t_0)}^2_{L_2(\Omega)}\right).
	\end{multline*}
	For $t_1 \geq t^*$ the inequality yields
	\begin{multline*}
		\norm{\Rot h(t_1)}^2_{L_2(\Omega)} + \norm{h(t_1)}^2_{L_2(\Omega)} + \norm{\theta(t_1)}^2_{L_2(\Omega)} \\
		\leq \norm{\Rot h(t_0)}^2_{L_2(\Omega)} + \norm{h(t_0)}^2_{L_2(\Omega)} + \norm{\theta(t_0)}^2_{L_2(\Omega)},
	\end{multline*}
	which is our claim.
\end{proof}

\begin{proof}[Proof of Theorem \ref{t2}]
	From Lemma \ref{lem29} it follows that
	\begin{multline*}
		\sup_k \left(\norm{\Rot h(kT)}^2_{L_2(\Omega)} + \norm{h(kT)}^2_{L_2(\Omega)} + \norm{\theta(kT)}^2_{L_2(\Omega)}\right) \\
		\leq \norm{\Rot h(0)}^2_{L_2(\Omega)} + \norm{h(0)}^2_{L_2(\Omega)} + \norm{\theta(0)}^2_{L_2(\Omega)}.
	\end{multline*}By assumption 
	\begin{align*}
		&\sup_k \norm{f_{,x_3}}_{L_2(\Omega\times(kT,(k + 1)T))} < \infty, \\
		&\sup_k \norm{g_{,x_3}}_{L_2(\Omega\times(kT,(k + 1)T))} < \infty.
	\end{align*}
	Thus, we set
	\begin{equation*}
		\sup_k \delta(kT) \leq \delta(0) =: \delta.
	\end{equation*}
	Let $t_0 = 0$, $t_1 = T$. In view of Theorem \ref{t1} with $\delta$ we get the existence of regular solution on the interval $[0,T]$. Lemma \ref{lem22} yields the inequality
	\begin{equation*}
		\norm{v(T)}^2_{H^1(\Omega)} + \norm{\omega(T)}^2_{H^1(\Omega)} < \norm{v(0)}^2_{H^1(\Omega)} + \norm{\omega(0)}^2_{H^1(\Omega)}
	\end{equation*}
	for $T > 0$ sufficiently large. It allows us to use Theorem \ref{t1} with the initial conditions $v(T)$, $\omega(T)$ and with $\delta$ on the time interval $[T,2T]$. From Lemma \ref{lem22} it follows that
	\begin{equation*}
		\norm{v(2T)}^2_{H^1(\Omega)} + \norm{\omega(2T)}^2_{H^1(\Omega)} < \norm{v(T)}^2_{H^1(\Omega)} + \norm{\omega(T)}^2_{H^1(\Omega)},
	\end{equation*}
	which in view of previous estimate provides us with regular solution on the interval $[0,2T]$. Reiterating this procedure yields the existence of regular solution on $[0,kT]$. Passing with $k$ up to infinity provides the global existence. 
\end{proof}

\end{document}